\documentclass[10pt,onecolumn,letterpaper]{article}
\usepackage[authoryear,sort]{natbib}
\usepackage{filecontents}
\usepackage{graphicx}
\usepackage{amssymb}
\usepackage[pdfpagelabels=true,plainpages=false,colorlinks=true,
linkcolor=blue,citecolor=blue,urlcolor=blue]{hyperref}
\usepackage[latin1]{inputenc}
\usepackage{algorithm}
\usepackage{algorithmic}
\usepackage{amsfonts}
\usepackage{amsmath}
\usepackage{cite}
\newcommand{\be}{\begin{equation}}
	\newcommand{\ee}{\end{equation}}
\newcommand{\bea}{\begin{eqnarray}}
	\newcommand{\eea}{\end{eqnarray}}
\usepackage{caption}
\usepackage{subcaption}

\setlength{\topmargin}{-.5in}
\setlength{\textheight}{9in}
\setlength{\oddsidemargin}{.125in}
\setlength{\textwidth}{6.25in}
\usepackage{graphicx,epsfig}
\usepackage{enumerate}
\usepackage{setspace}
\usepackage{amssymb}
\usepackage{amsmath}
\usepackage{amsthm}
\usepackage{appendix}
\newtheorem{alg1}{\textbf{Algorithm}}[section]
\numberwithin{equation}{section}
\newtheorem{thm}{Theorem}[section]
\theoremstyle{definition}
\newtheorem{dfn}{Definition}[section]
\theoremstyle{Remark}
\linespread{1.03}

\theoremstyle{lemma}
\newtheorem{rem}{Remark}[section]
\newtheorem{lem}{Lemma}[section]

\theoremstyle{example}
\newtheorem{example}{Example}[section]
\begin{document}

\title{A Quasi Newton Method for Uncertain Multiobjective Optimization Problems via Robust Optimization Approach}
\author{Shubham Kumar\footnote{IIITDM Jabalpur, India, E-mail: kumarshubham3394@gmail.com},
Nihar Kumar Mahato \footnote{IIITDM Jabalpur, India, E-mail: nihar@iiitdmj.ac.in, Corresponding Author},
Md Abu T Ansary \footnote{IIT Jodhpur, India, E-mail: md.abutalha2009@gmail.com},
Debdas Ghosh \footnote{IIT(BHU), India, E-mail: debdas.mat@iitbhu.ac.in}\\}

\date{}
\maketitle

\begin{abstract}
	{In this paper, we propose a quasi Newton method to solve the robust counterpart of an uncertain multiobjective optimization problem under an arbitrary finite uncertainty set. Here the robust counterpart of an uncertain multiobjective optimization problem is the minimum of objective-wise worst case, which is a nonsmooth deterministic multiobjective optimization problem. In order to solve this robust counterpart with the help of quasi Newton method, we construct a sub-problem using Hessian approximation and solve it to determine a descent direction for the robust counterpart. We introduce an Armijo-type inexact line search technique to find an appropriate step length, and develop a modified BFGS formula to ensure positive definiteness of the Hessian matrix at each iteration. By incorporating descent direction, step length size, and modified BFGS formula, we write the quasi Newton's descent algorithm for the robust counterpart. We prove the convergence of the algorithm under standard assumptions and demonstrate that it achieves superlinear convergence rate. Furthermore, we validate the algorithm by comparing it with the weighted sum method through some numerical examples by using a performance profile.}\\\\
		\textbf{Keywords:}{ Multiobjective optimization problem, Uncertainty, Robust optimization, Robust efficiency, Quasi Newton's method, Line search techniques.}
	\end{abstract}

\section{Introduction}
Before introducing an uncertain multiobjective optimization problem, we start with a multiobjective optimization problem. In multiobjective optimization problem, we want to solve more than one objective functions simultaneously. But, there is no single point which minimize all objective function at once, so the concept of optimality is replaced by Pareto optimality for multiobjective optimization problem. Multiobjective optimization problem have many application in field of science, for that see \citet{bhaskar2000applications,stewart2008real} and references therein.
Mathematically, a deterministic multiobjective optimization problem is given as follows
\begin{equation}\label{0.1}	P:~~~ \displaystyle \min_{x\in D\subset\mathbb{R}^n}\zeta(x)
\end{equation}
where $\zeta:\mathbb{R}^n\to\mathbb{R}^m$ such that $\zeta(x)=(\zeta_1(x),\zeta_2(x),\ldots,\zeta_m(x))$ and  $\zeta_j:\mathbb{R}^n\to\mathbb{R}$, $j\in\Lambda=\{1,2,\ldots,m\}$, $D\subset\mathbb{R}^n$ is a feasible set. In particular, if $D=\mathbb{R}^n$ then the above deterministic  multiobjective  optimization problem is called unconstrained multiobjective optimization problem.
\par Various solution strategies for multiobjective optimization problems have been proposed in the literature ~\citet{fliege2000steepest,drummond2005steepest,fliege2009newton,miettinen1999nonlinear,ansary2020sequential,ansary2019sequential,fukuda2014survey,ansary2015modified,lai2020q,ansary2021globally,gass1955computational,povalej2014quasi}.
Several iterative methods originally designed for scalar optimization problems have been extended and analyzed to address multiobjective optimization problems. The field of iterative methods for multiobjective optimization problems was initially introduced by \citet{fliege2000steepest}. Since then, several authors have expanded upon this area, including the development of Newton's method (\citet{fliege2009newton}), quasi Newton method (\citet{ansary2015modified,povalej2014quasi,lai2020q,mahdavi2020superlinearly,morovati2018quasi,qu2011quasi}), conjugate gradient method (\citet{gonccalves2020extension,lucambio2018nonlinear}), projected gradient method (\citet{cruz2011convergence,fazzio2019convergence,fukuda2013inexact,fukuda2011convergence,drummond2004projected}), and proximal gradient method (\citet{bonnel2005proximal,ceng2010hybrid}). Convergence properties are a common characteristic of these methods. Notably, these approaches do not require transforming the problem into a parameterized form, distinguishing them from scalarization techniques given by \citet{eichfelderadaptive} and heuristic approaches given by \citet{laumanns2002combining}.
\par An unconstrained uncertain multiobjective optimization problem with a parameter uncertainty (perturbation) set consists of a set of deterministic multiobjective optimization problems, which can be expressed as follows:
\begin{equation}\label{0.2}
	P(U)=\{P(\xi):\xi\in U\},
\end{equation}
where for any fix $\xi\in U$, $$P(\xi):\displaystyle \min_{x\in \mathbb{R}^n}\zeta(x,\xi)=( \zeta_1(x,\xi),\zeta_2(x,\xi),..., \zeta_m(x,\xi)), ~~ \xi\in U\subset \mathbb{R}^k$$
$\zeta:\mathbb{R}^n\times U\to\mathbb{R}^m$ and $U$ is uncertainity set.
\par The concept of robust optimization, originally developed for uncertain single optimization problems by \citet{soyster1973convex,ben2009robust}, has been extended and analyzed for uncertain multiobjective optimization problems \citet{kuroiwa2012robust,ehrgott2014minmax}.  \citet{kuroiwa2012robust},  introduced the concept of minimax robustness for multiobjective optimization problems, replacing the objective vector of the uncertain problem with a vector that incorporates worst-case scenarios for each objective component. Further,  \citet{ehrgott2014minmax} generalized the concept of minimax robustness for multiobjective optimization in a comprehensive manner. This robustness concept, also known as robust optimization, enables the transformation of an uncertain multiobjective optimization problem into a deterministic multiobjective optimization problem, referred to as the robust counterpart. There are several type of robust counterpart used to change uncertain multiobjective optimization problem into deterministic multiobjective optimization problem. By \citet{ehrgott2014minmax}, the minimax approach and the objective-wise worst-case cost type robust counterpart are employed to transform the uncertain multiobjective optimization problem into a deterministic one, which can then be solved using scalarization techniques such as the weighted sum method and the $\epsilon$-constraint method. For further details of the weighted sum method and the $\epsilon$-constraint method see \citet{ehrgott2005multicriteria}.
 \par Scalarization techniques have drawbacks for uncertain multiobjective optimization problems (see \citet{ehrgott2014minmax}). These approaches require the pre-specification of weights, constraints, or function importance, which is not known in advance.
To address these difficulties, \citet{shubham2023newton} proposed solving uncertain multiobjective optimization problems using Newton's method by using objective wise worst case cost type robust counterpart (OWC). Generally, OWC is a non-smooth deterministic multiobjective optimization problem and cannot be handled using smooth multiobjective optimization techniques which is given in the literature \citet{fliege2000steepest,drummond2005steepest,fliege2009newton,ansary2020sequential,ansary2019sequential,fukuda2014survey,ansary2015modified,lai2020q,ansary2021globally,gass1955computational,povalej2014quasi}. \citet{shubham2023newton} extended \citet{fliege2009newton} Newton's method for deterministic multiobjective optimization problems to uncertain multiobjective optimization problems using OWC. However, Newton's method is computationally expensive. To overcome this issue, our paper aims to solve uncertain multiobjective optimization problems using OWC and the quasi Newton method.\\
Quasi Newton algorithms, initially introduced in 1959  by \citet{davidonvariable} and later popularized by \citet{fletcher1963rapidly}, are a class of methods for solving single-objective optimization problems. These algorithms are known for avoiding second-order derivative computations and performing well in practice. In the quasi Newton method, the search direction is determined based on a quadratic model of the objective function, where the true Hessian is replaced by an approximation that is updated after each iteration. Quasi Newton algorithms have also been extended for multiobjective optimization problems \citet{qu2011quasi}. BFGS (Broyden-Fletcher-Goldfarb-Shanno) is the most effective quasi Newton update formula for single-objective optimization problems and has been extended for smooth multiobjective optimization problems. Under suitable assumptions, it has been proven that the quasi Newton algorithm with BFGS update and appropriate line search is globally convergent for strongly convex functions \citet{qu2011quasi}.
\par A Newton method has also been proposed for uncertain multiobjective optimization problems via a robust optimization approach by \citet{shubham2023newton}. In Newton's method, the search direction is computed based on a quadratic model of the objective function, where the Hessian of each objective function over the element of uncertainty set is positive definite at each iteration. However, it is not an easy task to compute the Hessian matrix at every step of the method given in \citet{shubham2023newton}. Therefore, we present a quasi-Newton method for uncertain multiobjective optimization problems, in which a search direction is found by using the approximation of the Hessian matrix of each objective function over the element of uncertainty set at every iteration. The approximation of the Hessian matrix does not make the calculation easier only, but it makes the algorithm faster as well.
\par 
In literature, there is currently no existing quasi Newton method for the robust counterpart of uncertain multiobjective optimization problems. \citet{povalej2014quasi} studied the quasi Newton method for multiobjective optimization problems. In our study, we have extended the idea of \citet{povalej2014quasi} for the robust counterpart of uncertain multiobjective optimization problems.
\par In this paper, we employ the quasi Newton method to solve the $P(U)$ under consideration uncertainty set $U$ is finite by using objective-wise worst-case (OWC). The OWC problem is generally a non-smooth multiobjective optimization problem, which means that the BFGS update formula designed for smooth multiobjective optimization problems is not applicable. To address this issue, we modify the BFGS update formula with Armijo type inexact line search technique and incorporate it into the quasi Newton algorithm for the OWC problem. Furthermore, we demonstrate the convergence of the quasi Newton algorithm to the Pareto optimal point of the OWC problem with a superlinear rate. Importantly, the Pareto optimal solution for the OWC problem also serves as the robust Pareto optimal solution for $P(U)$.

\par The paper is structured as follows: Section $\ref{s2}$ provides important results, basic definitions, and theorems relevant to our problem. In subsection $\ref{ss3.1}$, we present the solution to the quasi Newton descent direction finding subproblem. Subsequently, in subsection $\ref{ss3.2}$, we introduce an Armijo-type line search rule to determine a suitable step size, ensuring a decrease in the function value along the quasi Newton descent direction.  Subsection $\ref{ss3.3}$ presents a modified BFGS-type formula and the quasi Newton's algorithm for $RC^*_1$. By utilizing this algorithm, we generate a sequence, and its convergence to a critical point is proven in subsection $\ref{ss3.4}$. In subsection $\ref{ss3.5}$, we establish that under certain assumptions, the sequence generated by the quasi Newton algorithm converges to a Pareto optimum with a superlinear rate. Finally, in subsection $\ref{ss3.6}$, we numerically verify the quasi Newton's algorithm for $RC^*_1$ using appropriate examples. Section $\ref{s4}$ concludes the paper with remarks on the algorithm.


\section{Preliminaries}\label{s2}
Let's introduce some notations.
$\mathbb{R}$: the set of real numbers,
$\mathbb{R}_{\geq}$: the set of non-negative real numbers,
$\mathbb{R}_{>}$: the set of positive real numbers,
$\mathbb{R}_{>} = \{x \in \mathbb{R} : x > 0\}$,
$\mathbb{R}_{\geq} = \{x \in \mathbb{R} : x \geq 0\}$,
$\mathbb{R}^{n} = \mathbb{R} \times ... \times \mathbb{R}$ ($n$ times),
$\mathbb{R}^{n}_{\geq} = \mathbb{R}_{\geq} \times ... \times \mathbb{R}_{\geq}$ ($n$ times),
$\mathbb{R}^{n}_{>} = \mathbb{R}_{>} \times ... \times \mathbb{R}_{>}$ ($n$ times).
In the context of $p, q \in \mathbb{R}^n$, the notation $p \leq q$ means that each component of $p$ is less than or equal to the corresponding component of $q$ ($p_i \leq q_i$ for all $i = 1, 2, ..., n$).
Similarly, for any $p, q \in \mathbb{R}^n$: $p \geq q$ if and only if $p - q \in \mathbb{R}^n_{\geq}$, which is equivalent to $p_i - q_i \geq 0$ for each $i$,
$p > q$ if and only if $p - q \in \mathbb{R}^n_{>}$, which is equivalent to $p_i - q_i > 0$ for each $i$.
Lastly, we denote the indexed sets as $\bar\Lambda = \{1, 2, ..., p\}$ and $\Lambda = \{1, 2, ..., m\}$, which contain $p$ and $m$ elements, respectively.

\par Now we define Pareto optimal solution (efficient solution) for problem $P$ defined in $(\ref{0.1})$. A feasible point $x^*\in D$ is said to be an efficient (weakly efficient) solution for $P$ if there is no other $x\in D$ such that $\zeta(x)\leq\zeta(x^*)$ $\&$  $\zeta(x)\not =\zeta(x^*)$ $\big(\zeta(x)<\zeta(x^*)\big).$ If $x^*$ is an efficient (weakly efficient) solution, then $\zeta(x^*)$ is called non dominated  (weakly non dominated) point, and the set of efficient solution and non dominated point are called efficient set and non dominated set respectively. A feasible point $x^*$ is said to be locally efficient (locally weakly efficient) solution if $D$ is restricted to a neighborhood of $x^*$ as a efficient (weakly efficient) solution i.e., if there is a neighborhood $N\subset D$ of $x^*$ such that $x^*$ is an efficient solution (weakly efficient solution) on $N$. In case, if $\zeta(x)$ is $\mathbb{R}^m-$convex (i.e., each component of $\zeta(x)$ is convex) then locally efficient solution will be globally efficient solution.   \citet{gopfert1990vektoroptimierung}, \citet{LucDT(1988)} given the necessary condition for locally weakly efficient solution for smooth deterministic multiobjective optimization problem (\ref{0.1}). The necessary condition for a given point $x^*$ to be a locally weakly efficient solution for (\ref{0.1}) is $I(J(\zeta(x^*)))\cap(-\mathbb{R}^m_{>})=\emptyset,$ where $I(J(\zeta(x^*))$ denotes the image set of Jacobian of $\zeta$ at $x^*$.  \citet{fliege2000steepest} given the definition of critical point for smooth deterministic multiobjective optimization problem (\ref{0.1}). A point $x^*$ is said to be locally critical point for smooth deterministic multiobjective optimization problem (\ref{0.1}) if $I(J(\zeta(x^*)))\cap(-\mathbb{R}^m_{>})=\emptyset$. From the definition of critical point it is clear that if $x^*$ is not a critical point then $\exists$ a $d\in\mathbb{R}^n$ such that $\nabla \zeta_j(x^*)^Td<0$ for all $j\in \Lambda.$
\par In this paper, our focus is on finding the solution to $P(U)$ (as defined in equation (\ref{0.2})), where the uncertainty set $U$ is a finite nonempty set. To achieve this, we transform the problem into a deterministic multiobjective optimization problem using the worst-case cost type robust counterpart. We then solve this transformed problem using the quasi Newton method. In this case, we assume that $U$ is a finite uncertainty set with $p$ elements. Thus, the uncertain multiobjective optimization problem $P(U)$ with a finite uncertainty set $U$ can be reformulated as follows:
\begin{equation}\label{mp}
P(U)=\{P(\xi_i):i\in\bar{\Lambda}\}, ~U=\{\xi_i:i\in\bar{\Lambda}\},
\end{equation}
where for any fix $\xi_i\in U$, $$P(\xi_i):\displaystyle \min_{x\in \mathbb{R}^n}\zeta(x,\xi_i)=( \zeta_1(x,\xi_i),\zeta_2(x,\xi_i),..., \zeta_m(x,\xi_i)), ~~ \xi_i\in U\subset \mathbb{R}^k,$$
and its objective wise worst case cost type robust counterpart($OWC$), $RC^*_1$ is given as follows:
\begin{equation}\label{1.2}
	RC^*_1:~~\min_{x\in \mathbb{R}^n}F(x),~ \text{where}~F(x)=(F_1(x),F_2(x),\ldots,F_m(x)),~F_j(x)=\max_{i\in\bar{\Lambda}}\zeta_j(x,\xi_i).
\end{equation}
Additionally, let's define $I_j(x)$ as the set of all points in $\bar{\Lambda}$ where $\zeta_j(x, \xi_i)$ achieves its maximum value. In other words, $I_j(x)$ is given by $I_j(x) = \{i \in \bar{\Lambda} : \zeta_j(x, \xi_i) = F_j(x)\}$. Further details can be found in \citet{sun2006optimization}.

For the problem $P(U)$, our objective is to find a robust efficient solution or a robust weakly efficient solution, which can be defined as follows:
\begin{dfn}\citet{ehrgott2014minmax}
	A feasible point $x^*\in\mathbb{R}^n$ is said to be robust efficient (robust weakly efficient) if there is no  $x\in\mathbb{R}^n\setminus\{x^*\}$ such that $\zeta(x;U)\subseteq \zeta(x^*;U)-\mathbb{R}^k_\ge$ \big($\zeta(x;U)\subseteq \zeta(x^*;U)-\mathbb{R}^k_>$\big), where $\zeta(x;U)=\{\zeta(x,\xi):\xi \in U\}$ set of all possible values of objective function.
	\end{dfn}
Note that $RC^*_1$ represents a deterministic non-smooth multiobjective optimization problem. Solving this problem will yield a solution in the form of Pareto optimal or weak Pareto optimal solutions. To establish the connection between $RC^*_1$ and $P(U)$, M. Ehrgott et al. presented a theorem that demonstrates that the solution of $RC^*_1$ coincides with the solution of $P(U)$.
\begin{thm}\label{t1}\rm Let $P(U)$ be an uncertain multiobjective optimization problem with finite uncertainty set which is defined in (\ref{mp})  and $RC^*_1$ be the robust counterpart of $P(U)$. Then:
	\begin{enumerate}[(a)]\rm
		\item If $x^*\in \mathbb{R}^n$ is a efficient solution to $RC^*_1$. Then $x^*$ is robust efficient solution for $P(U)$.
		\item If $\displaystyle\max_{i\in \bar{\Lambda}} \zeta_j(x,\xi_i)$  exist for all $j\in\Lambda$ and all $x \in \mathbb{R}^n$ and $x^*$
		is a weakly efficient solution to $RC^*_1.$ Then $x^*$ is robust weakly efficient solution for $P(U)$.
	\end{enumerate}
\end{thm}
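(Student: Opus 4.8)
The plan is to prove both parts by contraposition, reducing the \emph{set-based} robust domination appearing in the definition of robust efficiency to the \emph{componentwise} (objective-wise) domination that defines efficiency for $RC^*_1$. The bridge between the two notions is the elementary observation that each $F_j(x)=\max_{i\in\bar\Lambda}\zeta_j(x,\xi_i)$ is attained at some index $i^*\in I_j(x)$, so a single representative point of the worst-case image set $\zeta(x;U)$ already controls the value $F_j(x)$. This is what lets me feed a set inclusion into inequalities between the scalar functions $F_j$.

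For part (a) I would assume $x^*$ is \emph{not} robust efficient and exhibit a point that dominates it for $RC^*_1$. Negating the definition gives $x\neq x^*$ with $\zeta(x;U)\subseteq\zeta(x^*;U)-\mathbb{R}^m_\ge$. Fixing $j\in\Lambda$ and choosing $i^*\in I_j(x)$ so that $F_j(x)=\zeta_j(x,\xi_{i^*})$, I apply the inclusion to the point $\zeta(x,\xi_{i^*})\in\zeta(x;U)$ to obtain an index $l$ with $\zeta(x,\xi_{i^*})\le\zeta(x^*,\xi_l)$; reading off the $j$-th coordinate yields $F_j(x)=\zeta_j(x,\xi_{i^*})\le\zeta_j(x^*,\xi_l)\le\max_{i}\zeta_j(x^*,\xi_i)=F_j(x^*)$. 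Since $j$ is arbitrary this gives $F(x)\le F(x^*)$. To conclude that $x^*$ is not efficient for $RC^*_1$ I still need $F(x)\neq F(x^*)$, and this is where the non-degeneracy carried by robust efficiency (the worst-case image sets not being equivalent) must be used to force a strict drop in at least one coordinate of $F$. Together, $F(x)\le F(x^*)$ and $F(x)\neq F(x^*)$ say that $x$ dominates $x^*$ for $RC^*_1$, so $x^*$ is not efficient for $RC^*_1$; this is the contrapositive of (a).

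For part (b) the same scheme runs more cleanly, because the strict cone $\mathbb{R}^m_>$ does the bookkeeping automatically. Assuming $x^*$ is not robust weakly efficient, there is $x\neq x^*$ with $\zeta(x;U)\subseteq\zeta(x^*;U)-\mathbb{R}^m_>$. For each $j$, taking $i^*\in I_j(x)$ and the corresponding index $l$ from the inclusion now gives a \emph{strict} inequality $F_j(x)=\zeta_j(x,\xi_{i^*})<\zeta_j(x^*,\xi_l)\le F_j(x^*)$; here the hypothesis that $\max_{i\in\bar\Lambda}\zeta_j(x,\xi_i)$ exists for every $j$ and every $x$ is exactly what guarantees that the maximizer $i^*$, and hence $F_j(x)$, is well defined. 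As this holds for all $j$, I obtain $F(x)<F(x^*)$, so $x$ weakly dominates $x^*$ and $x^*$ is not weakly efficient for $RC^*_1$, which is the contrapositive of (b).

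I expect the only genuine obstacle to lie in part (a): passing from the inclusion with the \emph{non-strict} cone to a strict decrease in some component of $F$. The difficulty is that the maximizing index $i^*$ depends on $j$, so the representative chosen to control one objective need not witness strictness for another; the argument cannot rely on a single pointwise strict inequality and must instead invoke the non-equality of the worst-case image sets. Part (b) sidesteps this issue completely through the strict cone, which is precisely why the theorem isolates it under the explicit max-attainment hypothesis.
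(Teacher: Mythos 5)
Your approach is the same as the paper's: argue by contraposition, use the set inclusion coming from the negation of robust (weak) efficiency to produce, for each objective $j$, an index $l$ whose value dominates that of the maximizing index $i^*$, and thereby pass from the set-based domination to the componentwise inequality $F_j(x)\le F_j(x^*)$ (resp.\ $F_j(x)<F_j(x^*)$). Your part (b) is complete and matches the paper's argument essentially line for line; the remark that the max-attainment hypothesis is what makes $i^*$ well defined is fine (and in the finite-$\bar\Lambda$ setting that hypothesis is automatic anyway).

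In part (a) you correctly isolate the real difficulty, but you do not close it, and the route you sketch for closing it does not work. From $\zeta(x;U)\subseteq\zeta(x^*;U)-\mathbb{R}^k_\ge$ you obtain only $F(x)\le F(x^*)$; to contradict efficiency of $x^*$ for $RC^*_1$ you also need $F(x)\ne F(x^*)$. You propose to extract this from ``the non-equality of the worst-case image sets,'' but the negation of robust efficiency supplies no such non-equality: since $0\in\mathbb{R}^k_\ge$, the inclusion can hold with $\zeta(x;U)=\zeta(x^*;U)$, in which case $F(x)=F(x^*)$ and no contradiction with efficiency of $x^*$ arises. So there is no non-degeneracy available to exploit, and under the definitions as literally stated the strict drop in some component of $F$ simply does not follow. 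You should be aware that the paper's own proof commits exactly the same elision: it derives $F_j(\bar x)\le F_j(x^*)$ for all $j$ and then asserts that ``$F(x^*)$ is equal to or dominated by $F(\bar x)$, thus $x^*$ is not an efficient solution,'' which fails in the ``equal'' case. Your write-up is therefore more candid than the source about where the argument is thin, but as a proof of (a) it is incomplete, and the missing step is not a routine one --- it would require either a strict-cone (or $\mathbb{R}^k_\ge\setminus\{0\}$) formulation of robust domination or an additional hypothesis.
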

\begin{proof}
\text{(a)} We can prove the result by contradiction. On contary, we assume that $x^*$ is not a robust efficient solution for $P(U)$ then by definition of robust efficient solution there exists $\bar{x}\in \mathbb{R}^n$ such that
	\begin{align*}&\zeta(\bar{x};U)\subseteq \zeta(x^*;U)-\mathbb{R}^k_\ge,\\  &\Longrightarrow  \zeta(\bar{x};U)-\mathbb{R}^k_\ge\subseteq \zeta(x^*;U)-\mathbb{R}^k_\ge\\
	&	\Longrightarrow ~\text{ for all }~ \xi_i\in U ~\text{there exist a }~\xi_l\in U~ \text{such that}\\
	&	\Longrightarrow  \zeta(\bar{x},\xi_i)\leq \zeta(x^*,\xi_l)\\
	&	\Longrightarrow  \displaystyle \max_{\xi_i\in U}\zeta_j(\bar{x},\xi_i)\leq \displaystyle \max_{\xi_l\in U}\zeta_j(x^*,\xi_l)~\text{for all} ~j\in \Lambda.
	\end{align*}
By the last expression, it is clear that $F(x^*)$ is equal to or dominated by $F(\bar{x})$ in $RC^*_1,$ thus $x^*$ is not an efficient solution for $RC^*_1,$ which contradicts the fact that $x^*$ is an efficient solution for $RC^*_1.$ Hence, $x^*$ is a robust efficient solution for $P(U).$\\
\text{(b)}  Similar to item (a), item (b) can be prove by the contradiction. On contrary, we assume that $x^*$ is not a robust weak efficient solution for $P(U)$ then by definition of robust efficient solution there exists $\bar{x}\in \mathbb{R}^n$ such that
\begin{align*}&\zeta(\bar{x};U)\subseteq \zeta(x^*;U)-\mathbb{R}^k_>,\\  &\Longrightarrow  \zeta(\bar{x};U)-\mathbb{R}^k_\ge\subseteq \zeta(x^*;U)-\mathbb{R}^k_>\\
	&	\Longrightarrow ~\text{ for all }~ \xi_i\in U ~\text{there exist a }~\xi_l\in U~ \text{such that}\\
	&	\Longrightarrow  \zeta(\bar{x},\xi_i)< \zeta(x^*,\xi_l)\\
	&	\Longrightarrow  \displaystyle \max_{\xi_i\in U}\zeta_j(\bar{x},\xi_i)< \displaystyle \max_{\xi_l\in U}\zeta_j(x^*,\xi_l)~\text{for all} ~j\in \Lambda.
\end{align*}
By the last expression, it is clear that $F(x^*)$ is strictly dominated by $F(\bar{x})$ in $RC^*_1,$ thus $x^*$ is not a weak efficient solution for $RC^*_1,$ which contradicts the fact that $x^*$ is a weak efficient solution for $RC^*_1.$ Hence, $x^*$ is a robust weak efficient solution for $P(U).$
\end{proof}
According to Theorem $\ref{t1}$, we can observe that solution of $RC^*_1$ is the solution $P(U)$. Therefore, we employ the quasi Newton algorithm to solve $RC^*_1$. To do so, we first establish the necessary condition for efficiency or Pareto optimality. For a given point $x^* \in \mathbb{R}^n$ to be a locally weakly efficient solution of $RC^*_1$, it is required that $I(Conv\{\displaystyle \cup_{j\in \Lambda} \partial F_j(x^*)\})\cap(-\mathbb{R}^{n}_{>})= \emptyset.$ Motivated by this notion, we introduce the concept of a critical point for $RC^*_1$.
\begin{dfn}\label{d2.2}\citet{shubham2023newton} A point $x^*\in\mathbb{R}^n$ is said to be critical point for $RC^*_1$ if  $$I(Conv\{\displaystyle \cup_{j\in \Lambda} \partial F_j(x^*)\})\cap(-\mathbb{R}^{n}_{>})= \emptyset.$$
	\end{dfn}
Note that if $x^*$ is a critical point for $RC^*_1$ then there is no $v\in \mathbb{R}^n$ such that $\nabla\zeta_j(x^*,\xi_i)^Tv<0$, for all $i\in I_j(x),$ $j\in \Lambda$, where $I_j(x)=\{i\in\bar{\Lambda}:\zeta_{j}(x^*,\xi_i)=F_j(x^*)\}.$
\begin{dfn}\citet{nocedal2006wright}
		Let $ f:\mathbb{R}^n \to \mathbb{R}$ be a function. Then a vector $v$ is said to be descent direction of $f$ at $x$ if and only if there exists $\delta > 0$ such that $f(x + \alpha v) < f(x),$ $\forall\alpha\in (0, \delta).$ If $f$ is continuously differentiable then a vector $v\in\mathbb{R}^n$ is said to be a descent direction for $f$ at $x$ if $\nabla f(x)^Tv< 0$.
\end{dfn}
\begin{dfn}\citet{shubham2023newton}
	 In case of $RC^*_1$, a vector $v$ is said to be descent direction for $F$ at $x$ if $$\nabla \zeta_j(x,\xi_i)^Tv< 0, ~\forall j \in \Lambda$$ and $i\in I_j(x).$
Also if $v$ is descent direction for $F(x)$ at $x$ then there exists $\epsilon >0$ such that  $$F_j(x+\alpha v)< F_j(x) ~\forall~ j \in \Lambda~ \text{and} ~\alpha \in(0,\epsilon].$$
\end{dfn}
\begin{thm}\label{directional derivative}\citet{dhara2011optimality} Let $F_j: \mathbb{R}^n \to  \mathbb{R}$ be a function such that $F_j(x)=\max_{i\in\bar{\Lambda}}\zeta_j(x,\xi_i).$ Then:
	\begin{enumerate}[(i)]
		\item The directional derivative of $F_j$ at $x$ in the direction $v$ is given by
		$F'_j(x,v)=\displaystyle\max_{i\in I_j(x)}\nabla\zeta_j(x,\xi_i)^Tv,$ where $I_j(x) = \{i \in \bar{\Lambda} : \zeta_j(x, \xi_i) = F_j(x)\}$.
		\item\label{ddii} The subdifferential of $F_j$ is  \[\partial F_j(x)= Conv \bigg(\bigcup_{i\in I_j(x)}\zeta_j(x,\xi_i)\bigg).\] Also $x^*=\displaystyle arg\min_{x\in \mathbb{R}^n}F_j(x)$ if and only if $0\in\partial F_j(x^*)).$
		\end{enumerate}
	\end{thm}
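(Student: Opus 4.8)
The plan is to prove (i) directly from the definition of the one-sided directional derivative and then deduce (ii) by recognizing that derivative as the support function of the claimed subdifferential, assuming each $\zeta_j(\cdot,\xi_i)$ is continuously differentiable (and, for the minimization characterization in (ii), convex, so that $F_j$ is convex as a finite maximum of convex functions).

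For part (i), I would begin with
$$F'_j(x,v)=\lim_{t\to 0^+}\frac{F_j(x+tv)-F_j(x)}{t}.$$
The crucial localization step is that, since $\bar{\Lambda}$ is finite and each $\zeta_j(\cdot,\xi_i)$ is continuous, every inactive index $i\notin I_j(x)$ satisfies $\zeta_j(x,\xi_i)<F_j(x)$ strictly, and this strict inequality persists on a neighborhood of $x$. Hence there is $\delta>0$ so that for all $t\in(0,\delta)$ the maximum defining $F_j(x+tv)$ is attained only among active indices, i.e. $F_j(x+tv)=\max_{i\in I_j(x)}\zeta_j(x+tv,\xi_i)$. For each such $i$ I would Taylor-expand $\zeta_j(x+tv,\xi_i)=F_j(x)+t\,\nabla\zeta_j(x,\xi_i)^Tv+o(t)$, using $\zeta_j(x,\xi_i)=F_j(x)$ on $I_j(x)$. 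Because the index set is finite the error is uniform, so taking the maximum, dividing by $t$, and letting $t\to 0^+$ gives $F'_j(x,v)=\max_{i\in I_j(x)}\nabla\zeta_j(x,\xi_i)^Tv$.

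For part (ii), I would invoke the standard fact that for a convex function the directional derivative $v\mapsto F'_j(x,v)$ is exactly the support function of the subdifferential $\partial F_j(x)$. By part (i), this support function equals $v\mapsto\max_{i\in I_j(x)}\nabla\zeta_j(x,\xi_i)^Tv$, which is precisely the support function of the finite set $\{\nabla\zeta_j(x,\xi_i):i\in I_j(x)\}$; since a set and its convex hull share the same support function, and closed convex sets are uniquely determined by their support functions, it follows that $\partial F_j(x)=Conv\{\nabla\zeta_j(x,\xi_i):i\in I_j(x)\}$. The equivalence that $x^*$ minimizes $F_j$ if and only if $0\in\partial F_j(x^*)$ is then the Fermat rule for convex functions, where convexity of $F_j$ upgrades the necessary condition to a sufficient one.

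The main obstacle I anticipate is the rigorous justification of the localization in part (i): one must argue carefully that inactive indices are excluded for all sufficiently small $t$ and that the Taylor remainders stay uniformly controlled across $I_j(x)$ when the maximum is interchanged with the limit. Finiteness of $\bar{\Lambda}$ is exactly what makes this uniformity automatic; absent it, one would need an equi-differentiability assumption to carry the same argument through.
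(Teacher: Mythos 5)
Your argument is correct, but note that the paper itself offers no proof of this theorem: it is quoted verbatim from \citet{dhara2011optimality} as a known result, so there is no in-paper proof to compare against. What you have written is the standard Danskin-type argument for a finite pointwise maximum, and it is sound: the localization to active indices for small $t>0$ is exactly justified by finiteness of $\bar{\Lambda}$ together with continuity of the $\zeta_j(\cdot,\xi_i)$, the uniform control of the Taylor remainders over the finite set $I_j(x)$ is what licenses interchanging the maximum with the limit, and the identification of $\partial F_j(x)$ via support functions (a closed convex set is determined by its support function, and a finite set and its convex hull share one) is the textbook route. Two small remarks: first, you correctly read through a typo in the statement, which writes $\zeta_j(x,\xi_i)$ inside the convex hull where the gradients $\nabla\zeta_j(x,\xi_i)$ are meant; second, you are right that part (ii) silently requires convexity of each $\zeta_j(\cdot,\xi_i)$ (so that $F_j$ is convex and the support-function identity and Fermat rule apply), an assumption the theorem statement omits but which the paper imposes globally in Section 3 where the result is used.
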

\begin{dfn}\label{1**}\citet{miettinen1999nonlinear} Let $f: S\subset\mathbb{R}^n \to  \mathbb{R}^m$ be a continuously differentiable function such that $f(x)=(f_1(x),f_2(x),\ldots,f_m(x))$ then
	$f$ is $\mathbb{R}^m-\text{strongly convex}$ if and only if there exists $\omega>0$ such that $\omega\|u\|^2 \leq u^T\nabla^2f_j(x)u, \forall x \in S, u\in\mathbb{R}^n ~\text{and} ~j \in \Lambda.$
\end{dfn}
\par To prove the necessary and sufficient condition for Pareto optimality for $RC^*_1$  we give lemma \ref{lm1}.
\begin{lem}\label{lm1}
	If $x^*$ is critical point for $F$ if and only if $0\in Conv\{\displaystyle \bigcup_{j\in \Lambda}\partial F_j(x^*)\}$.
\end{lem}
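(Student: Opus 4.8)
The plan is to reduce the geometric criticality condition of Definition \ref{d2.2} to a statement about a single convex set and then close the gap with a separation argument. Write $C := Conv\{\bigcup_{j\in\Lambda}\partial F_j(x^*)\}$. First I would record that $C$ is nonempty, convex, and compact: by Theorem \ref{directional derivative}(\ref{ddii}) each $\partial F_j(x^*)=Conv\big(\bigcup_{i\in I_j(x^*)}\nabla\zeta_j(x^*,\xi_i)\big)$ is the convex hull of the finitely many gradients indexed by $I_j(x^*)\subseteq\bar\Lambda$, hence compact; a finite union of compact sets is compact, and the convex hull of a compact subset of $\mathbb{R}^n$ is again compact. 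This compactness is exactly what will let me invoke strict separation later.

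Next I would translate the definition of critical point into a statement about $C$. By the remark following Definition \ref{d2.2}, $x^*$ is critical precisely when there is no $v\in\mathbb{R}^n$ with $\nabla\zeta_j(x^*,\xi_i)^Tv<0$ for all $j\in\Lambda$ and all $i\in I_j(x^*)$. I claim this is the same as saying there is no $v$ with $g^Tv<0$ for every $g\in C$. Indeed, the gradients $\nabla\zeta_j(x^*,\xi_i)$ with $i\in I_j(x^*)$, $j\in\Lambda$, are exactly the generators of $C$; if a linear functional $v$ is negative on all generators, then it is negative on every convex combination of them (a convex combination of strictly negative numbers is strictly negative), hence on all of $C$, while the converse inclusion is trivial since the generators lie in $C$. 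Thus criticality is equivalent to: there is no $v\in\mathbb{R}^n$ with $\max_{g\in C} g^Tv<0$.

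It then remains to prove the purely convex-geometric equivalence: for the nonempty compact convex set $C$, one has $0\in C$ if and only if there is no $v$ with $g^Tv<0$ for all $g\in C$. The forward direction is immediate, for if $0\in C$ then $\max_{g\in C} g^Tv\ge 0^Tv=0$ for every $v$, so no such $v$ exists and $x^*$ is critical. For the reverse direction I would argue by contraposition: if $0\notin C$, project $0$ onto the closed convex set $C$ to obtain its nonzero nearest point $\bar g$; the variational inequality for the projection, $\langle -\bar g,\,g-\bar g\rangle\le 0$ for all $g\in C$, gives $g^T\bar g\ge\|\bar g\|^2>0$. Setting $v=-\bar g$ then yields $g^Tv\le -\|\bar g\|^2<0$ for every $g\in C$, so $v$ is a descent direction and $x^*$ is not critical. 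Combining the two reductions gives the lemma.

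The compactness bookkeeping and the projection inequality are routine. The only genuinely delicate point I anticipate is keeping the inequalities strict throughout: one must use that $0\notin C$ together with compactness yields strict separation, producing a bona fide direction with $g^Tv<0$ on all of $C$ rather than merely $g^Tv\le 0$, since it is precisely the strict inequality that certifies a descent direction and hence non-criticality.
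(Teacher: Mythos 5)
Your proof is correct, and it takes a cleaner route than the paper's for one of the two implications. Both arguments ultimately rest on separating $0$ from the compact convex set $C=Conv\{\bigcup_{j\in\Lambda}\partial F_j(x^*)\}$ to handle the direction ``critical $\Rightarrow 0\in C$'': the paper invokes a generic separation theorem to contradict a support-type inequality, while you use the projection of $0$ onto $C$ and its variational inequality, which has the advantage of producing the separating direction $v=-\bar g$ explicitly as a certified descent direction, and of making transparent why the inequality is strict (namely $g^Tv\le-\|\bar g\|^2<0$). Where you genuinely diverge is the converse direction ``$0\in C\Rightarrow x^*$ critical'': the paper introduces an auxiliary function $F'(x)=\max_{j\in\Lambda}F_j(x)-F_j(x^*)$, argues that $0\in\partial F'(x^*)$ forces $x^*$ to minimize $F'$, and then derives a contradiction from a hypothetical descent direction, whereas you dispose of it in one line via the support-function observation that $0\in C$ implies $\max_{g\in C}g^Tv\ge 0$ for every $v$. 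Your reduction of criticality from the finitely many generating gradients $\nabla\zeta_j(x^*,\xi_i)$, $i\in I_j(x^*)$, to all of $C$ (a convex combination of strictly negative numbers is strictly negative) is exactly the bookkeeping the paper leaves implicit, and your version avoids the paper's somewhat delicate opening claim that criticality yields a single $d\in\bigcup_j\partial F_j(x^*)$ with $v^Td\ge 0$ for all $v$, which taken literally would already force $d=0$. In short, your argument buys a shorter, self-contained, and constructive proof at the cost of nothing beyond the standard projection theorem.
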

\begin{proof}
	Since $x^*$ is critical point for $F$ then there must exists $d\in\bigcup_{j\in \Lambda}\partial F_j(x^*)$ such that
	\begin{equation}\label{ccc}
		v^Td\geq0,~\forall v\in\mathbb{R}^n.
	\end{equation} On contrary, if we assume $0\not\in Conv\{\displaystyle \bigcup_{j\in \Lambda}\partial F_j(x^*)\}.$ Since, $Conv\{\displaystyle \bigcup_{j\in \Lambda}\partial F_j(x^*)\}$ and $\{0\}$ are closed and convex sets  then with the help of separation theorem, there exists $v\in\mathbb{R}^n$ and $b\in \mathbb{R}$ such that
	$v^T0\geq b$ and $v^Td<b~\forall d\in Conv\{\displaystyle \bigcup_{j\in \Lambda}\partial F_j(x^*)\}.$ Jointly both inequality contradicts (\ref{ccc}). Hence,  $0\in Conv\{\displaystyle \bigcup_{j\in \Lambda}\partial F_j(x^*)\}$. Conversely, we have to prove that if $0\in Conv\{\displaystyle \bigcup_{j\in \Lambda}\partial F_j(x^*)\}$ then $x^*$ is critical point for $F$. For that we define $F'(x)=\displaystyle\max_{j\in\Lambda} F_j(x)- F_j(x^*).$ Then by item (\ref{ddii}) of Theorem \ref{directional derivative} we have  $\partial F'(x)= Conv \bigg(\displaystyle\bigcup_{i\in\Lambda}\partial\zeta_j(x)\bigg).$ Therefore, by assumption  we get $0\in Conv \bigg(\displaystyle\bigcup_{i\in\Lambda}\partial\zeta_j(x)\bigg)$ which implies $x^*=\arg\displaystyle \min_{x\in\mathbb{R}^n}F'(x).$ On contrary, if we assume $x^*$ is not a critical point then by Definition \ref{d2.2}, there exists $s\in \mathbb{R}^n$ such that $\nabla\zeta_j(x^*,\xi_i)^Ts<0$, for all $i\in I_j(x^*),$ $j\in \Lambda$ i.e., $F'_j(x^*,s)<0$ for all $j.$ Then there exists some $\eta>0$ sufficiently small such that $F_j(x^*+\eta s)<F_j(x^*)$ for all $j$ which implies $F'(x^*+\eta s)<0=F'(x^*)$ holds for some $x^*+\eta s\in \mathbb{R}^n$. Then there is a contradiction to the fact that $x^*=\arg\displaystyle \min_{x\in\mathbb{R}^n}F'(x).$ Therefore our assumption $x^*$ is not a critical point is wrong and hence $x^*$ is a critical point for $F.$
\end{proof}
We now prove the necessary and sufficient condition for the Pareto optimality for the robust counterpart $RC^*_1$ using Lemma \ref{lm1}.
\begin{thm}\label{nce}
	If  $\zeta_j(x,\xi)$ is continuously differentiable and convex for each $j\in \Lambda$ and $\xi\in U$, then  $x^*$ is a weakly efficient solution for $RC^*_1$ if and only if
$$	0 \in conv \left( \displaystyle \cup_{j=1}^{m}\partial F_j(x^*) \right).$$
\end{thm}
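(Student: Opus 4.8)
The plan is to prove the equivalence by connecting weak efficiency of $RC^*_1$ to the criticality condition established in Lemma \ref{lm1}, exploiting convexity to upgrade the necessary condition into a sufficient one. First I would establish the necessity direction: if $x^*$ is weakly efficient for $RC^*_1$, then no direction $v$ can simultaneously decrease all the $F_j$. Using the directional derivative formula from Theorem \ref{directional derivative}(i), weak efficiency implies there is no $v$ with $F'_j(x^*,v)<0$ for all $j\in\Lambda$, which is precisely the statement that $x^*$ is a critical point in the sense of Definition \ref{d2.2}. Then Lemma \ref{lm1} immediately converts this into $0\in Conv\left(\cup_{j\in\Lambda}\partial F_j(x^*)\right)$.

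For the sufficiency direction, I would assume $0\in Conv\left(\cup_{j\in\Lambda}\partial F_j(x^*)\right)$ and use convexity to derive weak efficiency. Here the convexity hypothesis on each $\zeta_j(\cdot,\xi)$ is essential and does real work. Since $F_j(x)=\max_{i\in\bar\Lambda}\zeta_j(x,\xi_i)$ is a pointwise maximum of finitely many convex functions, each $F_j$ is itself convex. The plan is to suppose, for contradiction, that $x^*$ is not weakly efficient: then there exists $\bar x$ with $F_j(\bar x)<F_j(x^*)$ for all $j\in\Lambda$. Setting $v=\bar x-x^*$, convexity of each $F_j$ gives $F'_j(x^*,v)\leq F_j(\bar x)-F_j(x^*)<0$ for every $j$, so $v$ is a common descent direction. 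This contradicts criticality, and hence (via Lemma \ref{lm1}) contradicts $0\in Conv\left(\cup_{j\in\Lambda}\partial F_j(x^*)\right)$.

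The main obstacle I anticipate is making the sufficiency argument airtight, specifically justifying the inequality $F'_j(x^*,\bar x-x^*)\leq F_j(\bar x)-F_j(x^*)$. This is the standard convex-analysis fact that for a convex function the directional derivative is bounded above by the finite difference, namely $F'_j(x^*,v)=\lim_{t\downarrow 0}\frac{F_j(x^*+tv)-F_j(x^*)}{t}\leq F_j(x^*+v)-F_j(x^*)$, which follows from the monotonicity of difference quotients of convex functions. I would state this cleanly and cite the convexity of $F_j$ inherited from the $\zeta_j(\cdot,\xi_i)$. The remainder is a clean chaining of Lemma \ref{lm1} with the directional-derivative characterization, so once this convexity estimate is in place the two directions close symmetrically.

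Alternatively, I could run the sufficiency direction entirely through subdifferential calculus: from $0\in Conv\left(\cup_{j\in\Lambda}\partial F_j(x^*)\right)$ and the convexity of the $F_j$, one can write $0$ as a convex combination $\sum_j \lambda_j g_j$ with $g_j\in\partial F_j(x^*)$ and $\sum_j\lambda_j=1$, $\lambda_j\geq 0$, and then use the subgradient inequality $F_j(x)-F_j(x^*)\geq g_j^T(x-x^*)$ to show that no $x$ can strictly dominate $x^*$ in every component. This route avoids directional derivatives altogether and may be cleaner to present; I would decide between the two based on which lemmas the paper most naturally reuses, but both hinge on the same convexity input.
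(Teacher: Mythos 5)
Your proposal is correct, and both of your suggested routes would work; they differ from the paper's proof in instructive ways. For necessity, the paper simply invokes local Lipschitz continuity of the $\zeta_j$ and cites Theorem 4.3 of M\"akel\"a et al.\ as a black box, whereas you derive criticality directly from the absence of a common descent direction and then apply Lemma \ref{lm1}; your version is more self-contained and reuses the paper's own machinery. For sufficiency, the paper argues that criticality yields a single index $j^0$ with $F'_{j^0}(x^*,d)\geq 0$ for \emph{all} $d\in\mathbb{R}^n$ and then applies the gradient inequality for $\zeta_{j^0}$; as written this overreaches, since criticality only guarantees that for each $d$ \emph{some} index $j$ (depending on $d$) satisfies $F'_j(x^*,d)\geq 0$, and demanding $\nabla\zeta_{j^0}(x^*,\xi_i)^Td\geq 0$ for every $d$ would force that gradient to vanish, which is strictly stronger than $0\in conv\left(\cup_{j}\partial F_j(x^*)\right)$. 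Your contrapositive argument --- a strictly dominating $\bar{x}$ produces $v=\bar{x}-x^*$ with $F'_j(x^*,v)\leq F_j(\bar{x})-F_j(x^*)<0$ for all $j$ via monotonicity of convex difference quotients, contradicting criticality --- avoids this issue entirely, as does your alternative via writing $0=\sum_j\lambda_j g_j$ and summing the subgradient inequalities to get $\sum_j\lambda_j\bigl(F_j(x)-F_j(x^*)\bigr)\geq 0$. Either of your versions is a sound and arguably tighter replacement for the paper's converse; just be sure to state explicitly that each $F_j$ is convex as a finite max of convex functions, since that is where the convexity hypothesis enters.
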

\begin{proof}
	Let  $x^*$ be a weakly efficient solution for  $RC^*_1$. We have to show $0\in Conv\{\displaystyle \cup_{j\in \Lambda}\partial F_j(x^*)\}$.
	Since given function $\zeta_j(x,\xi_i)$ is continuously differentiable and convex for each $j$ and $\xi_i \in U$, then $\zeta_j(x,\xi_i)$ will be locally Lipschitz continuous for all $i\in \bar{\Lambda}$. Then, by Theorem 4.3 in \citet{makela2014nonsmooth} $0\in Conv\{\displaystyle \cup_{j\in \Lambda}\partial F_j(x^*)\}$. Conversely, by assumption $0\in Conv\{\displaystyle \cup_{j\in \Lambda}\partial F_j(x^*)\}$ it is clear that $x^*$ is critical point. Then, for atleast one $j^0$, we have $F'_{j^0}(x^*,d) \geq 0, ~\forall~ d\in \mathbb{R}^n$. Now, by using the Definition \ref{directional derivative}, we have
	\begin{equation}\label{Equ1}
		\nabla \zeta_{j^0}(x^*,\xi_i)^Td\geq 0, ~\forall~ d\in \mathbb{R}^n,~ i\in I_{j^0}(x^*).
	\end{equation}
	By convexity of $F_j$ and $\zeta_j(x,\xi_i)$, we get
	\begin{center}
		$\zeta_{j^0}(x,\xi_i)\geq \zeta_{j^0}(x^*,\xi_i) + \nabla \zeta_{j^0}(x^*,\xi_i)^T(x-x^*),~ \forall ~i\in I_{j^0}(x^*)$ and $x, ~x^* \in \mathbb{R}^n.$
	\end{center}
	By (\ref{Equ1}), we have
	\begin{center}
		$ \zeta_{j^0}(x,\xi_i)\geq \zeta_{j^0}(x^*,\xi_i),~ \forall~ i\in I_{j^0}(x^*),$
	\end{center}
	and therefore
	\begin{center}
		$ F_{j^0}(x)\geq F_{j^0}(x^*),~ \forall x\in \mathbb{R}^n,$
	\end{center}
	i.e., $x^*$ is weakly efficient solution.
\end{proof}
\par BFGS (Broyden, Fletcher, Goldfarb and Shanno) is a most popular quasi Newton method for single objective optimization problem. It is a line search method with a descent direction $$ v^k=-(B^k)^{-1}\nabla f(x^k)$$ where $f:S\to \mathbb{R}$ is a continuously differentiable function, $S\subset \mathbb{R}^n$ and $B^k$ is a positive definite symmetric matrix which is updated at every iteration. Also the new iteration is given by $$x^{k+1}=x^k+\alpha_{k}v^k$$ where $v^k$ and $\alpha_{k}$ are the descent direction and steplenth. Now the BFGS update formula is given as follows
\begin{equation}\label{*}
B^{k+1}=B^k-\frac{B^ku^k(u^k)^TB^k}{(u^k)^TB^ku^k}+\frac{p^k(p^k)^T}{(u^k)^T p^k}
\end{equation}
where $u^k= x^{k+1}-x^k=\alpha_kv^k$ and $p^k=\nabla f(x^{k+1})-\nabla f(x^{k}).$ Also from \citet{nocedal2006wright}, $B^{k+1}$ is positive definite whenever $B^k$ is positive definite and $\alpha_k$ satisfies Wolfe line search rule. For the BFGS method two conditions must be satisfied. The seacant equation $B^{k+1}u^k=p^k$ is the first condition which we obtain by multiplying $(\ref{*})$ by $u^k$. The curvature condition $(u^k)^T p^k>0$ is the second necessary condition. Also if $f$ is strongly convex, then for any two points $x^k$ and $x^{k+1}$ the curvature condition is satisfied.
BFGS update formula which is given in (\ref{*}) defined for single objective optimization problem, and \citet{povalej2014quasi}, generalized this idea for unconstrained multiobjective optimization problem of strongly convex objective functions. The BFGS update formula, given by {\v{Z}}iga Povalej is as follows
\begin{equation}\label{bfgsformop}
	B_j^{k+1}=B_j^k-\frac{B_j^ku^k(u^k)^TB_j^k}{(u^k)^TB_j^ku^k}+\frac{p^k(p^k)^T}{(u^k)^T p^k}
\end{equation}
where $u^k= x^{k+1}-x^k=\alpha_kv^k$ and $p^k=\nabla f_j(x^{k+1})-\nabla f_j(x^{k}).$
\par In the subsequent section, we develop a quasi-Newton method for $RC^*_1$ defined in \ref{1.2}. To write the quasi-Newton algorithm, search direction,  stepsize in the search direction, and Hessian approximation update formula are the three main required things.
\section{Quasi Newton method for \texorpdfstring{$RC^*_1$}{Lg}}
In this section, we establish the necessary results to formulate the quasi Newton algorithm. The algorithm requires three main components: (1) the quasi Newton descent direction, (2) an appropriate step length along the descent direction, and (3) an updated formula to maintain the positive definiteness of the Hessian approximation at the current iteration point.

To obtain these components, we consider $RC^*_1$ with a function $F: \mathbb{R}^n \rightarrow \mathbb{R}^m$ defined as $F(x) = (F_1(x), F_2(x), ..., F_m(x))$, where $F_j(x)= \displaystyle \max_{i\in \bar{\Lambda}} \zeta_j(x,\xi_i),~ j\in \Lambda$. Here, $\zeta_j:\mathbb{R}^n \times U \to \mathbb{R}$ represents a twice continuously differentiable and strictly convex function for each $x$ and $\xi_i \in U$.

Next, we construct a subproblem that will provide the quasi Newton descent direction once solved.

\subsection{Subproblem to find the quasi Newton descent direction for \texorpdfstring{$RC^*_1$}{Lg}}
\begin{equation}\label{ss3.1}
	\displaystyle \min_{s\in \mathbb{R}^n}\Theta_{x}(s),
\end{equation}
where
$\Theta_{x}(s)= \displaystyle\max_{j\in \Lambda} \max_{i\in \bar{\Lambda}}\{ \zeta_j(x,\xi_i)+ \nabla \zeta_j(x,\xi_i)^Ts +\frac{1}{2}s^TH_j(x,\xi_i)s-F_j(x)\}.$\\
In the subproblem (\ref{ss3.1}), $H_j(x,\xi_i)$ is the approximation of the Hessian $\nabla^2\zeta_j(x,\xi_i),$ for each $j\in\Lambda$, $i\in\bar{\Lambda}.$ Also, we are assuming that $H_j(x,\xi_i)$ is positive definite for each $j\in\Lambda$, $i\in\bar{\Lambda}.$ Then the objective function $\Theta_{x}(s)$ is strongly convex being a maximum of strongly convex function and hence subproblem (\ref{ss3.1}) is well defined and has a unique solution. If we denote optimal solution and optimal value of the subproblem (\ref{ss3.1}) by $s(x)$ and $\vartheta(x)$ respectively, then we can write
\begin{equation}
	s(x)=\displaystyle arg\min_{s\in \mathbb{R}^n}\Theta_{x}(s)
	\end{equation}
\begin{equation}
	\vartheta(x)=\Theta_{x}(s(x)).
\end{equation}
\begin{thm}\rm
	Let at any fix $x\in\mathbb{R}^n$, $H_j(x,\xi_i)$ be a positive definite  approximation of the Hessian $\nabla^2\zeta_j(x,\xi_i),$ for each $j\in\Lambda$, $i\in\bar{\Lambda},$ then the subproblem (\ref{ss3.1}) has a unique solution, namely $s(x)$ which is given by
	\begin{equation}\label{dd}
		s(x) = -\bigg(\sum\limits_{j\in \Lambda}  \sum\limits_{i\in\bar{\Lambda}}\lambda_{ij} H_j(x,\xi_i)\bigg) ^{-1}	\sum\limits_{j\in \Lambda}  \sum\limits_{i\in \bar{\Lambda}} \lambda_{ij} \nabla \zeta_j(x,\xi_i).
	\end{equation}
\end{thm}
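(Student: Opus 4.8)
The plan is to convert the unconstrained minimization of the nonsmooth convex function $\Theta_x$ into an equivalent smooth convex program by introducing a scalar slack variable, and then to read off the formula for $s(x)$ directly from the Karush--Kuhn--Tucker (KKT) conditions. Because each $H_j(x,\xi_i)$ is positive definite, every term inside the double maximum is a strongly convex quadratic in $s$, so $\Theta_x$ is strongly convex and coercive; as already noted before the statement, this guarantees that a unique minimizer $s(x)$ exists. The remaining work is therefore entirely about producing the closed form in (\ref{dd}) and confirming that the KKT point coincides with this minimizer.

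First I would rewrite (\ref{ss3.1}) as the constrained problem
$$\min_{(s,t)\in\mathbb{R}^n\times\mathbb{R}}\ t \qquad \text{subject to}\qquad \zeta_j(x,\xi_i)+\nabla\zeta_j(x,\xi_i)^Ts+\tfrac{1}{2}s^TH_j(x,\xi_i)s-F_j(x)\leq t,\ \ j\in\Lambda,\ i\in\bar{\Lambda}.$$
This is a convex program with a linear objective and convex quadratic constraints, and it is equivalent to (\ref{ss3.1}) because at any optimum $t$ must equal the maximum of the left-hand sides, i.e. $t=\Theta_x(s)$. Slater's condition holds trivially: for any fixed $s$, choosing $t$ strictly larger than the finite maximum makes all constraints strict. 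Hence the KKT conditions are both necessary and sufficient for optimality here.

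Next I would attach multipliers $\lambda_{ij}\geq 0$ to the constraints and form the Lagrangian. Stationarity with respect to $t$ yields $\sum_{j\in\Lambda}\sum_{i\in\bar{\Lambda}}\lambda_{ij}=1$, so the active multipliers form a convex combination and in particular cannot all vanish. Stationarity with respect to $s$ yields
$$\sum_{j\in\Lambda}\sum_{i\in\bar{\Lambda}}\lambda_{ij}\bigl(\nabla\zeta_j(x,\xi_i)+H_j(x,\xi_i)s\bigr)=0.$$
Collecting the quadratic terms, the coefficient matrix $\sum_{j\in\Lambda}\sum_{i\in\bar{\Lambda}}\lambda_{ij}H_j(x,\xi_i)$ is a nonnegative combination, with at least one strictly positive weight, of positive definite matrices, and is therefore itself positive definite, hence invertible. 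Solving the resulting linear system for $s$ then gives exactly (\ref{dd}).

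The step I expect to be the crux is the invertibility argument: it rests on combining the normalization $\sum_{j,i}\lambda_{ij}=1$ coming from the $t$-stationarity with the positive definiteness of each $H_j(x,\xi_i)$, so that the weighted sum cannot degenerate to a singular matrix. Once invertibility is secured the formula is immediate, and the uniqueness of $s(x)$ already established from strong convexity confirms that the KKT point we have produced is the genuine minimizer rather than merely a stationary candidate.
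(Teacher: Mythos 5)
Your proposal is correct and follows essentially the same route as the paper: reformulating the subproblem with a scalar slack variable $t$, verifying Slater's condition, and reading off $s(x)$ from the KKT stationarity conditions, with the normalization $\sum_{j,i}\lambda_{ij}=1$ guaranteeing at least one positive multiplier and hence positive definiteness of the weighted Hessian sum. If anything, your invertibility argument is slightly cleaner than the paper's, which redundantly invokes non-criticality of $x$ where the normalization condition alone suffices.
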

\begin{proof}
We want to find the solution of subproblem (\ref{ss3.1}) which is an unconstrained real-valued minimization problem.  Equivalently, by replacing $\Theta_{x}(s)$ as $t\in\mathbb{R}$ the subproblem (\ref{ss3.1}) can be written as
	\begin{center}
		$Q_N(x):$ $\displaystyle \min_{(t,s)\in \mathbb{R}\times\mathbb{R}^n}t$
		
		$ s.t. ~~~~~~~ \zeta_j(x,\xi_i)+ \nabla \zeta_j(x,\xi_i)^Ts +\frac{1}{2}s^TH_j(x,\xi_i)s-F_j(x)\leq t, ~~~\forall i\in \bar{\Lambda} ~\text{and} ~j\in \Lambda.$
		\end{center}
	Since the problem $Q_N(x)$ is a convex programming problem then the unique solution of  $Q_N(x)$ is given by $(t,s)=(\vartheta(x),s(x))$. Also $Q_N(x)$ has a Slater point then solution of this problem is given by the KKT optimality condition. For that the Lagrangian function is given by
	\begin{align*}
		L(t,s,\lambda) &= t + \sum\limits_{j\in \Lambda}  \sum\limits_{i\in \bar{\Lambda}} \lambda_{ij} ( \zeta_j(x,\xi_i)+ \nabla \zeta_j(x,\xi_i)^Ts +\frac{1}{2}s^TH_j(x,\xi_i)s-F_j(x)-t).
	\end{align*}
	Then the KKT optimality conditions are
	\begin{equation} \label{2}
		\sum\limits_{j\in \Lambda}  \sum\limits_{i\in \bar{\Lambda}} \lambda_{ij}=1
	\end{equation}
	\begin{equation}\label{3}
		\sum\limits_{j\in \Lambda}  \sum\limits_{i\in \bar{\Lambda}} \lambda_{ij} \nabla \zeta_j(x,\xi_i)+ \sum\limits_{j\in \Lambda}  \sum\limits_{i\in \bar{\Lambda}}\lambda_{ij} H_j(x,\xi_i)^Ts=0
	\end{equation}
	\begin{equation}
		\ \zeta_j(x,\xi_i)+ \nabla \zeta_j(x,\xi_i)^Ts +\frac{1}{2}s^TH_j(x,\xi_i)s-F_j(x)-t\leq 0,~~~\forall i\in\bar{\Lambda} ~\text{and}~ j\in \Lambda
	\end{equation}
	\begin{equation}
		\lambda_{ij}(\zeta_j(x,\xi_i)+ \nabla \zeta_j(x,\xi_i)^Tv +\frac{1}{2}s^TH_j(x,\xi_i)s-F_j(x)-t)= 0, ~~\lambda_{ij}\geq 0.
	\end{equation}
	From $(\ref{3})$, ~~~~~~~~~~~
	\begin{equation}\label{dd}
		s(x) = -\bigg(\sum\limits_{j\in \Lambda}  \sum\limits_{i\in\bar{\Lambda}}\lambda_{ij} H_j(x,\xi_i)\bigg) ^{-1}	\sum\limits_{j\in \Lambda}  \sum\limits_{i\in \bar{\Lambda}} \lambda_{ij} \nabla \zeta_j(x,\xi_i).
	\end{equation}
	Since $ H_j(x,\xi_i)$ is positive definite for all $x\in\mathbb{R}^n,$  $i\in\bar{\Lambda},$ and $j\in\Lambda$, and $\lambda_{ij}\geq0$ and $x$ is not a critical point, it follows from equation (\ref{2}) that there must exists at least one $\lambda_{ij}>0$. Consequently, the inverse of $\sum\limits_{j\in \Lambda}  \sum\limits_{i\in \bar{\Lambda}}\lambda_{ij} H_j(x,\xi_i)$ exists. Therefore $s(x)$ is well defined. Moreover, the optimal value of the subproblem can be expressed as follows:
	\begin{equation}\label{ov}
		\vartheta(x)=\max_{j\in \Lambda} \max_{i\in \bar{\Lambda}}\{ \zeta_j(x,\xi_i)+ \nabla \zeta_j(x,\xi_i)^Ts(x) +\frac{1}{2}s(x)^TH_j(x,\xi_i)s(x)-F_j(x)\}.
	\end{equation}
\end{proof}
\begin{thm}\label{Thm 3.2}\rm Let $s(x)$ and $\vartheta(x)$ be the solution and optimal value of the subproblem $Q_N(x)$ respectively. Then the following results will hold:
	\begin{enumerate}
		\item\label{th0} $ s(x)$ is bounded on compact subset   $W$ of  $\mathbb{R}^n$ and $\vartheta(x)\leq0$.
		\item The following conditions are equivalent :
		\begin{enumerate}
			\item The point $x$ is not a critical point.
			\item\label{theta<0} $\vartheta(x)<0.$
			\item $s(x)\not =0.$
			\item $s(x)$ is a descent direction for $F$ at $x$ for the problem $RC^*_1$ .
		\end{enumerate}
	\end{enumerate}
	In particular $x$ is critical point iff $\vartheta(x)=0.$
\end{thm}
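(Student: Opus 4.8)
The plan is to hinge the entire argument on the single observation that the null step is admissible and evaluates to zero. Since $F_j(x)=\max_{i\in\bar\Lambda}\zeta_j(x,\xi_i)$, each bracketed term in $\Theta_x(0)$ is $\zeta_j(x,\xi_i)-F_j(x)\le 0$, with equality exactly when $i\in I_j(x)$; hence the double maximum gives $\Theta_x(0)=0$. Because $\vartheta(x)=\min_{s}\Theta_x(s)\le\Theta_x(0)$, this at once delivers $\vartheta(x)\le 0$, proving the second assertion of item~\ref{th0}.

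To bound $s(x)$ on a compact set $W$ I would use strong convexity of $\Theta_x$. Fixing any index $j_0$ and any $i_0\in I_{j_0}(x)$ and keeping only that branch of the outer maximum gives the lower estimate $\Theta_x(s)\ge \nabla\zeta_{j_0}(x,\xi_{i_0})^{T}s+\tfrac12 s^{T}H_{j_0}(x,\xi_{i_0})s$, where the constant term vanishes because $\zeta_{j_0}(x,\xi_{i_0})=F_{j_0}(x)$ on $I_{j_0}(x)$. With a uniform eigenvalue lower bound $\omega>0$ for the positive definite matrices $H_j(x,\xi_i)$ over $W$, this becomes $\Theta_x(s)\ge -\|\nabla\zeta_{j_0}(x,\xi_{i_0})\|\,\|s\|+\tfrac{\omega}{2}\|s\|^{2}$. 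Evaluating at $s=s(x)$ and using $\vartheta(x)\le 0$ forces $\|s(x)\|\le \tfrac{2}{\omega}\|\nabla\zeta_{j_0}(x,\xi_{i_0})\|$, and the right-hand side stays bounded on $W$ by continuity of the gradients. Securing the uniform modulus $\omega$ is the one genuinely technical step; I would obtain it from continuity of $x\mapsto H_j(x,\xi_i)$ and compactness of $W$, or read it off from the standing positive-definiteness hypothesis.

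For the four-way equivalence I would close the cycle (a)$\Rightarrow$(b)$\Rightarrow$(d)$\Rightarrow$(a) and prove (b)$\Leftrightarrow$(c) separately. For (a)$\Rightarrow$(b): when $x$ is not critical, the characterization of non-criticality recorded after Definition~\ref{d2.2} (equivalently, Lemma~\ref{lm1} via a theorem-of-the-alternative argument) supplies a vector $v$ with $\nabla\zeta_j(x,\xi_i)^{T}v<0$ for all $i\in I_j(x)$ and $j\in\Lambda$; substituting $s=tv$ and letting $t\downarrow 0$ drives every bracketed term strictly below zero---the active branches $i\in I_j(x)$ through the linear term, the inactive branches $i\notin I_j(x)$ because $\zeta_j(x,\xi_i)-F_j(x)<0$---so that $\vartheta(x)\le\Theta_x(tv)<0$. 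For (b)$\Rightarrow$(d): from $\vartheta(x)<0$ each bracket at $s(x)$ is $\le\vartheta(x)<0$, and restricting to $i\in I_j(x)$ removes the constant while the nonnegative quadratic term $\tfrac12 s(x)^{T}H_j(x,\xi_i)s(x)\ge 0$ may be discarded, leaving $\nabla\zeta_j(x,\xi_i)^{T}s(x)\le\vartheta(x)<0$, which is precisely the descent condition for $F$ at $x$. Finally (d)$\Rightarrow$(a) is immediate, since the existence of a descent direction contradicts criticality by the same recorded note.

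The equivalence (b)$\Leftrightarrow$(c) rests on uniqueness of the minimizer: if $s(x)=0$ then $\vartheta(x)=\Theta_x(0)=0$, whereas if $\vartheta(x)=0=\Theta_x(0)$ then both $0$ and $s(x)$ minimize $\Theta_x$, forcing $s(x)=0$; the contrapositive gives $\vartheta(x)<0\Leftrightarrow s(x)\neq 0$. The concluding ``in particular'' statement is then just the contrapositive of (a)$\Leftrightarrow$(b) combined with $\vartheta(x)\le 0$: the point $x$ is critical exactly when it fails to be non-critical, i.e.\ when $\vartheta(x)$ is not strictly negative, i.e.\ when $\vartheta(x)=0$. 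I expect no obstacle beyond the uniform-modulus point flagged above, since the rest of the cycle is structurally forced once $\Theta_x(0)=0$ and the active-index reduction are available.
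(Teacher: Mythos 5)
Your proof is correct and follows essentially the same route as the paper: $\vartheta(x)\le 0$ via the admissible null step, boundedness of $s(x)$ from the uniform positive-definiteness of the $H_j(x,\xi_i)$ on $W$, the scaled descent vector $t v$ for (a)$\Rightarrow$(b), the active-index reduction for the descent property, and uniqueness of the minimizer for (b)$\Leftrightarrow$(c). If anything, your version is slightly tighter than the paper's at two points where it is sloppy --- you keep the strictly negative constants $\zeta_j(x,\xi_i)-F_j(x)<0$ on the inactive branches in (a)$\Rightarrow$(b) rather than discarding them (which the paper does, leaving the inactive indices unhandled), and you give an explicit quantitative bound $\|s(x)\|\le\tfrac{2}{\omega}\|\nabla\zeta_{j_0}(x,\xi_{i_0})\|$ where the paper merely asserts boundedness (deferring the real estimate to the subsequent continuity theorem).
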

\begin{proof}$(1)$ Let  $W$ be a compact subset of $\mathbb{R}^n$.	
	Since for each $j\in\Lambda,$ $\zeta_j(x,\xi_i)$ is  twice continuously differentiable for all $x \in \mathbb{R}^n$ and $\xi_i\in U$, then its first and second order derivative will be bounded on compact set  $W$. Also, $H_j(x,\xi)$ is the Hessian approximation of true Hessian of $\zeta_j(x,\xi_i)$ and $H_j(x,\xi)$ positive definite for each $i\in \bar{\Lambda}$ and $j\in \Lambda$. Hence, $s(x)$ is bounded on compact set   $W$.\\
	Since $t=1$ and $s=\bar0=(0,0,\ldots,0)\in \mathbb{R}^n$ lies in the feasible region then we have\\ 	
	$~~~~~~~\vartheta(x) \leq \displaystyle \max_{j\in \Lambda}\{\max_{i\in \bar\Lambda}\zeta_j(x,\xi_i) + \max_{i\in \bar\Lambda}\nabla \zeta_j(x,\xi_i)^T\bar 0 +\max_{i\in \bar\Lambda} \frac{1}{2}\bar 0^TH_j(x,\xi_i)\bar 0-\max_{i\in \bar\Lambda}F_j(x)\}=0.$\\
	Hence $\vartheta(x)\leq0$.\\
	$(2):(a)\implies(b)$\\
	Assume $x$ is not a critical point. Then there exists $\bar s\in\mathbb{R}^n$ such that
	\begin{center}
		$\nabla \zeta_j(x,\xi_i)^T \bar s < 0,  ~\forall j \in \Lambda$ and $i \in I_j(x)$.
	\end{center}
	Since $\vartheta(x)$ is the optimal value for the subproblem (\ref{ss3.1}), then  $\forall\delta >0$ we have
	\begin{center}
		$\vartheta(x) \leq \displaystyle \max_{j\in \Lambda}\{\max_{i\in \bar\Lambda}\zeta_j(x,\xi_i) + \max_{i\in \bar\Lambda}\{\nabla \zeta_j(x,\xi_i)^T(\delta\bar s) + \frac{1}{2}\bar (\delta\bar s)^TH_j(x,\xi_i)(\delta\bar s)\}-\max_{i\in \bar\Lambda}F_j(x)\}\}.$
	\end{center}
	Then we have
	\begin{center}
		$\vartheta(x) \leq \delta\bigg(\ \displaystyle\max_{j\in \Lambda} \max_{i\in \bar{\Lambda}} \{ \nabla\zeta_j(x,\xi_i)^T\bar s +\delta \displaystyle\frac{1}{2}\bar s^TH_j(x,\xi_i)\bar s\}\bigg) $.
	\end{center}
	For small enough $\delta>0$ right hand side of above inequality will be negative because of $\nabla \zeta_j(x,\xi_i)^T\bar s < 0$, $\frac{1}{2} \bar s^TH_j(x,\xi_i)\bar s > 0$ and also $\vartheta(x)\leq 0$.  Thus $\vartheta(x)
	< 0$.\\
	$(b) \implies (c)$\\ Since $\vartheta(x)$ is the optimal value of the subproblem $(\ref{ss3.1})$ and from (b) it is strictly negative so  we will get $s(x) \not =0$. Because if $s(x)  = 0$ then $\vartheta(x)$ will be zero which is not possible from (b). Hence if $\vartheta(x) < 0$ then $s(x) \not = 0$.\\
	$(c) \implies (d)$ \\ Let $s(x) \not = 0$. Then, $\vartheta (x) \not =0$. Since $\vartheta(x) \leq 0$ and $s(x) \not = 0$ then $\vartheta (x)<0$. Thus,
	\begin{align*}
		&\theta(x)\leq\displaystyle \max_{j\in \Lambda} \{\max_{i\in \bar{\Lambda}} \zeta_{j}(x,\xi_i)+ \max_{i\in \bar{\Lambda}} \{\nabla \zeta_j(x,\xi_i)^T s(x) +\frac{1}{2} s^T(x)H_j(x,\xi_i)s(x)\}-\max_{i\in \bar{\Lambda}} F_j(x)\}< 0\\
		&\implies\displaystyle \max_{j\in \Lambda} \max_{i\in \bar{\Lambda}} \{\nabla \zeta_j(x,\xi_i)^T s(x) +\frac{1}{2} s^T(x)H_j(x,\xi_i)s(x)\}< 0\\
		&\implies  \nabla\zeta_j(x,\xi_i)^Ts(x)< 0, ~\forall ~ j\in \Lambda ~\text{and}~ i\in \bar{\Lambda}\\
		& \implies \nabla \zeta_j(x,\xi_i)^Tv(x)< 0, ~\forall ~ j\in \Lambda ~\text{and}~ i\in I_j(x)\\
		&\implies s(x)~ \text{is a descent direction for}~ F~ \text{at} ~x~\text{for problem $RC^*_1$}.
	\end{align*}
	$(d) \implies (a)$  \\ Since $s(x)$ is a descent direction for $F$ at $x$, then we have
	\begin{equation*}
		\nabla \zeta_j(x,\xi_i)^Ts(x)< 0, ~\forall ~ j\in \Lambda ~\text{and}~ i\in I_j(x).
		~\text{Hence}~ x~ \text{is not a critical point.}
	\end{equation*}
	Also  if $\vartheta(x)<0$ then $s(x) \not= 0$ and also if $s(x) \not= 0$ then $\vartheta(x)<0$.  Thus, we have $x$ is  critical point if and only if $\vartheta(x) =0$.
\end{proof}
\par From the Theorem \ref{Thm 3.2}, it is clear that $s(x)\approx 0$ or $\vartheta(x)\approx0$ can be used as a stopping criterion in the quasi Newton algorithm.
Now in the next theorem we will prove $\vartheta(x)$ is continuous for every $x\in \mathbb{R}^n$.
\begin{thm}\rm
	Let $\vartheta:\mathbb{R}^n\to \mathbb{R}$ be a function which is defined in $(\ref{ov})$. Then, $\vartheta(x)$ is continuous.
\end{thm}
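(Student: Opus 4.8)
The plan is to prove continuity of $\vartheta$ by a standard value-function (parametric-minimization) argument, splitting the claim into upper and lower semicontinuity and using the boundedness of the minimizers established in Theorem \ref{Thm 3.2}.

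First I would record that the map
$$\phi(x,s) = \max_{j\in\Lambda}\max_{i\in\bar\Lambda}\Big\{\zeta_j(x,\xi_i) + \nabla\zeta_j(x,\xi_i)^T s + \tfrac12 s^T H_j(x,\xi_i) s - F_j(x)\Big\}$$
is jointly continuous in $(x,s)$. Indeed, each $\zeta_j(\cdot,\xi_i)$ is twice continuously differentiable, so $\zeta_j$, $\nabla\zeta_j$ and $H_j(\cdot,\xi_i)$ are continuous in $x$; the term $F_j(x)=\max_{i\in\bar\Lambda}\zeta_j(x,\xi_i)$ is continuous as a pointwise maximum of finitely many continuous functions; and the quadratic in $s$ is continuous. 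Since a finite maximum of jointly continuous functions is again jointly continuous, $\Theta_x(s)=\phi(x,s)$ is continuous on $\mathbb{R}^n\times\mathbb{R}^n$, and by definition $\vartheta(x)=\min_{s\in\mathbb{R}^n}\Theta_x(s)=\Theta_x(s(x))$.

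Now fix $x_0$ and take any sequence $x_k\to x_0$. For upper semicontinuity I would test with the fixed vector $s(x_0)$: since $\vartheta(x_k)\le\Theta_{x_k}(s(x_0))$ and $\Theta_{x_k}(s(x_0))\to\Theta_{x_0}(s(x_0))=\vartheta(x_0)$ by joint continuity, we obtain $\limsup_k\vartheta(x_k)\le\vartheta(x_0)$. For lower semicontinuity I would invoke part (\ref{th0}) of Theorem \ref{Thm 3.2}: the minimizers $s(x)$ are bounded on any compact set, so along the convergent sequence $\{x_k\}$ the points $\{s(x_k)\}$ lie in a fixed ball. Extracting a subsequence $s(x_{k_l})\to s^*$ and using joint continuity gives $\vartheta(x_{k_l})=\Theta_{x_{k_l}}(s(x_{k_l}))\to\Theta_{x_0}(s^*)\ge\min_s\Theta_{x_0}(s)=\vartheta(x_0)$. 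Applying this to a subsequence realizing $\liminf_k\vartheta(x_k)$ yields $\liminf_k\vartheta(x_k)\ge\vartheta(x_0)$, and combining the two bounds gives $\lim_k\vartheta(x_k)=\vartheta(x_0)$, i.e.\ continuity at $x_0$.

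The hard part will be the lower-semicontinuity half, which is precisely where compactness must enter: without a uniform bound on the minimizers, the subsequential-limit argument would collapse. The boundedness statement in Theorem \ref{Thm 3.2}(\ref{th0}) is exactly the lever that makes this work. A secondary point I would make explicit is that this argument requires the Hessian approximations $H_j(\cdot,\xi_i)$ to depend continuously on $x$; since the problem only posits positive definiteness of $H_j(x,\xi_i)$, I would state this continuity as a standing hypothesis, as it is what underwrites the joint continuity of $\Theta$ in the first paragraph.
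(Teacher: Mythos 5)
Your proof is correct, and it rests on the same two pillars as the paper's own argument: joint continuity in $(x,s)$ of the data defining $\Theta_x(s)$, and the uniform bound on the minimizers $s(\cdot)$ over compact sets (which the paper derives from $\nu\|s(x)\|^2\le\|\nabla\zeta_j(x,\xi_i)\|\,\|s(x)\|$, giving $\|s(x)\|\le\bar\nu/\nu$). The packaging differs, though. The paper does not split the claim into semicontinuity halves; instead it forms, for each $x$, the function $\Delta_x(z)=\Theta_z(s(x))$ (the subproblem objective at $z$ tested with the minimizer taken from $x$), shows that the family $\{\Delta_x\}$ is equicontinuous precisely because the $s(x)$ are uniformly bounded, and then extracts the two-sided estimate $|\vartheta(u)-\vartheta(w)|<\epsilon$ from the single symmetric chain $\vartheta(w)\le\Delta_u(w)\le\Delta_u(u)+|\Delta_u(w)-\Delta_u(u)|<\vartheta(u)+\epsilon$ together with the interchange of $u$ and $w$. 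Your lower-semicontinuity half replaces this symmetric test with a subsequence extraction $s(x_{k_l})\to s^*$ and the inequality $\Theta_{x_0}(s^*)\ge\min_s\Theta_{x_0}(s)=\vartheta(x_0)$; this is equally valid and arguably cleaner, since it avoids arguing equicontinuity of an uncountable family of functions and isolates exactly where compactness is needed. Your closing observation is also well taken: both arguments tacitly require $x\mapsto H_j(x,\xi_i)$ to be continuous --- the paper uses this when it asserts equicontinuity of the maps $E_{x,j,i}$, whose definition involves $H_j(z,\xi_i)$ at the varying point $z$ --- yet this is nowhere stated as a hypothesis there, so making it explicit is an improvement rather than a defect.
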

\begin{proof}
	To prove continuity of $\vartheta(x)$, it is suffice to prove that $\vartheta(x)$ is continuous in any arbitrary compact subset    $W$ of $\mathbb{R}^n.$ Since, by Theorem \ref{Thm 3.2},  $\vartheta(x)\leq 0$ then for every $j\in\Lambda$ and $i\in\bar{\Lambda}$ we have
	\begin{align*}
	\zeta_j(x,\xi_i)+\nabla \zeta_j(x,\xi_i)^Ts(x) +\frac{1}{2}s(x)^TH_j(x,\xi_i)s(x)-F_{j}(x)\leq0.
	\end{align*}
	Since, at the active indices $\zeta_j(x,\xi_i)$ attains its maximum value i.e., $\zeta_j(x,\xi_i)=F_j(x)$ then from the above equation we get	
	$$\nabla \zeta_j(x,\xi_i)^Ts(x) +\frac{1}{2}s(x)^TH_j(x,\xi_i)s(x)\leq0,$$
	which implies
	\begin{equation}\label{t^1}
		\frac{1}{2}s(x)^TH_j(x,
		\xi_i)s(x)\leq-\nabla \zeta_j(x,\xi_i)^Ts(x), ~\text{for every } j\in \Lambda,~i\in \bar\Lambda.
	\end{equation}
	For every $j\in \Lambda$ and $i\in \bar\Lambda$, approximation of true  Hessian of $\zeta_j(x,\xi_i)$  is positive definite for every $x\in
	\mathbb{R}^n$. Also,    $W$ is a compact subset of $\mathbb{R}^n$ then, eigenvalues of approximation of true Hessian  of $\zeta_j(x,\xi_i)$ are  bounded on $C.$ So, there exists $\nu>0$ and $\bar{\nu},$ such that
	\begin{equation}\label{t^2}
		\bar{\nu}=\max_{x\in C,~ j\in \Lambda}\max_{i\in\bar{\Lambda}}\|\zeta_j(x,\xi_i)\|
	\end{equation}
	and \begin{equation}\label{t^3}
		\nu\|w\|^2\leq w^TH_j(x,\xi_i)w,~ \text{for every} ~j\in\Lambda,~i\in \bar \Lambda ~\text{and}~\forall x\in C,w\in\mathbb{R}^n.
	\end{equation}
	Now from $(\ref{t^1})$, $(\ref{t^2})$, $(\ref{t^3})$ and Cauchy-Schwartz inequality we have
	\begin{align*}
		\nu\|s(x)\|^2\leq\|\nabla\zeta_j(x,\xi_i)\|\|s(x)\|\leq\bar\nu \|s(x)\| ~\forall y\in C,~j\in\Lambda ~\text{and}~ i\in\bar{\Lambda}.
	\end{align*}
	Which implies that $$\|s(x)\|\leq \frac{\bar\nu}{\nu},~~ \forall x\in C$$
	i.e., $s(x)$, the Newton's directions are bounded on    $W$.\\
	Now for $x\in C$, $j\in \Lambda$ and $i\in\bar{\Lambda}$ we define $$E_{x,j,i}:C\to \mathbb{R}$$
	$$s.t.~~~~~~~~z\to\zeta_j(z,\xi_i)+ \nabla \zeta_j(z,\xi_{i})s(x)+\frac{1}{2}s(x)^T H_{j}(z,\xi_{i})s(x)-F_j(z).$$ Thus the family $\{E_{x,j,i}\}_{x\in C,~j\in\Lambda,i\in\bar{\Lambda}}$ is equicontinuous. Therefore, $\{\Delta_x=\displaystyle\max_{j\in\Lambda}\max_{i\in\bar{\Lambda}}E_{x,j,i}\}_{x\in C}$ is also equicontinuous. \\
	Now if we take $\epsilon>0$ there exists $\delta>0$ such that $\forall u,w\in C$ $$\|u-w\|<\delta\implies |\Delta_x(u)-\Delta_x(w)|<\epsilon, \forall x\in C.$$
	Hence, for $\|u-w\|<\delta,$
	\begin{align*}
		\vartheta(w)&\leq\zeta_j(w,\xi_i)+ \nabla\zeta_j(w,\xi_i)^Ts(u)+s(u)^T\nabla^2\zeta_j(w,\xi_i)^Ts(u)-F_{j}(w)\\
		&=\Delta_u(w)\\
		&\leq \Delta_u(u)+|\Delta_u(w)-\Delta_u(u)|\\
		&< \vartheta(u)+\epsilon,
	\end{align*}
	thus, we get $\vartheta(w)-\vartheta(u)<\epsilon$, also if we interchange $u$ and $w$ then we get $\vartheta(w)-\vartheta(u)>-\epsilon$. Therefore,
	$$|\vartheta(u)-\vartheta(w)|<\epsilon, ~\text{whenever}~ \|u-w\|<\delta.$$ Thus, $\vartheta(x)$ is continuous in    $W$. Since,    $W$ is any arbitrary compact subset of $\mathbb{R}^n$ and hence, $\vartheta(x)$ is continuous.
\end{proof}
To find the step size in the search direction, we develop an Armijo type inexact line search technique in the following subsection.
\subsection{Armijo type inexact line search for quasi Newton method }\label{ss3.2}
\begin{thm}\rm
	Assume that $x$ is not critical point of $F$. Then for any $\beta>0$  and $\epsilon\in(0,1]$ there exists an $\alpha\in[0,\epsilon]$ such that
	\begin{equation*}\label{18''}
		F_j(x+\alpha s)\leq  F_j(x)+  \alpha \beta \vartheta(x),
		\end{equation*}
\end{thm}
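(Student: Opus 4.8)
The plan is to reduce the claimed Armijo-type inequality to a comparison between the one-sided directional derivative of each $F_j$ along the quasi Newton direction $s=s(x)$ and the subproblem's optimal value $\vartheta(x)$, and then run a standard limiting argument. First I would invoke Theorem \ref{Thm 3.2}: since $x$ is not a critical point, $\vartheta(x)<0$, $s(x)\neq 0$, and $s(x)$ is a descent direction for $F$ at $x$ for problem $RC^*_1$.

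The central step is to show that for every $j\in\Lambda$ one has $F'_j(x,s(x))<\vartheta(x)<0$. To obtain this, recall from $(\ref{ov})$ that $\vartheta(x)$ is the maximum over all $j\in\Lambda$ and $i\in\bar\Lambda$ of the quantities $\zeta_j(x,\xi_i)+\nabla\zeta_j(x,\xi_i)^Ts(x)+\frac{1}{2}s(x)^TH_j(x,\xi_i)s(x)-F_j(x)$. Restricting to an active index $i\in I_j(x)$, where $\zeta_j(x,\xi_i)=F_j(x)$, the constant terms cancel and the maximality of $\vartheta(x)$ gives
$$\nabla\zeta_j(x,\xi_i)^Ts(x)+\frac{1}{2}s(x)^TH_j(x,\xi_i)s(x)\le\vartheta(x).$$
Because $H_j(x,\xi_i)$ is positive definite and $s(x)\neq 0$, the quadratic term is strictly positive, so $\nabla\zeta_j(x,\xi_i)^Ts(x)<\vartheta(x)$ for every $i\in I_j(x)$. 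Taking the maximum over $i\in I_j(x)$ and invoking part (i) of Theorem \ref{directional derivative}, which identifies $F'_j(x,s(x))=\max_{i\in I_j(x)}\nabla\zeta_j(x,\xi_i)^Ts(x)$, yields the desired strict bound.

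I would then finish by a limit argument. For $\beta\in(0,1]$, which is the range that makes the estimate nontrivial, we have $\beta\vartheta(x)\ge\vartheta(x)>F'_j(x,s(x))$, so $F'_j(x,s(x))<\beta\vartheta(x)$ strictly. Since, by definition of the directional derivative, $\lim_{\alpha\to 0^+}\frac{F_j(x+\alpha s(x))-F_j(x)}{\alpha}=F'_j(x,s(x))$, there exists $\bar\alpha_j>0$ such that $\frac{F_j(x+\alpha s(x))-F_j(x)}{\alpha}\le\beta\vartheta(x)$, that is $F_j(x+\alpha s(x))\le F_j(x)+\alpha\beta\vartheta(x)$, for all $\alpha\in(0,\bar\alpha_j]$. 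Choosing $\alpha=\min\{\epsilon,\min_{j\in\Lambda}\bar\alpha_j\}$ produces a single $\alpha\in(0,\epsilon]\subseteq[0,\epsilon]$ for which the inequality holds simultaneously for all $j\in\Lambda$.

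I expect the main obstacle to be the middle step: extracting $F'_j(x,s(x))<\vartheta(x)$ from the subproblem's optimal value requires carefully isolating the active indices so that the cancellation $\zeta_j(x,\xi_i)-F_j(x)=0$ is legitimate, and then using positive definiteness of $H_j(x,\xi_i)$ to upgrade the non-strict subproblem bound to the strict inequality needed to absorb the factor $\beta$. I would also note that a genuinely positive step is obtained precisely for $\beta\in(0,1]$; for larger $\beta$ it is the non-strict formulation with $\alpha\in[0,\epsilon]$ (allowing the endpoint $\alpha=0$) that keeps the stated inequality valid.
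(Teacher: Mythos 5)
Your proof is correct, but it takes a genuinely different route from the paper's. The paper proceeds via a second-order expansion: it introduces an auxiliary function $F''^*_j(x,\alpha s)$, invokes uniform twice continuous differentiability to write $\zeta_j(x+\alpha s,\xi_i)$ with an explicit cubic remainder $\tfrac{1}{3!}\alpha^3K^j\|s\|^3$, and arrives at the estimate $F_j(x+\alpha s)\leq F_j(x)+\alpha\vartheta(x)+\tfrac{1}{2}\alpha(\alpha-1)\max_{j,i}s^TH_j(x,\xi_i)s+\tfrac{1}{3!}\alpha^3K^j\|s\|^3$, after which the negative term $\tfrac{1}{2}\alpha(\alpha-1)s^TH_js$ and the heuristic ``$\alpha^3\approxeq 0$'' are used to drop the remainder and pass from $\vartheta(x)$ to $\beta\vartheta(x)$. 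You instead work purely at first order: the key inequality $F'_j(x,s(x))\leq\vartheta(x)-\tfrac{1}{2}s(x)^TH_j(x,\xi_i)s(x)<\vartheta(x)$, obtained by restricting the max defining $\vartheta(x)$ to active indices (where $\zeta_j(x,\xi_i)-F_j(x)=0$ cancels) and using positive definiteness of $H_j$ together with $s(x)\neq 0$, combined with the characterization of $F'_j(x,\cdot)$ as a max of linear forms over $I_j(x)$ and the definition of the one-sided directional derivative as a limit. Your argument is rigorous where the paper's is informal (the ``$\alpha^3\approxeq 0$'' step), needs only continuous differentiability rather than uniform second-order bounds, and correctly isolates the role of $\beta$: for $\beta\in(0,1]$ the strict gap $F'_j(x,s)<\vartheta(x)\leq\beta\vartheta(x)$ yields a genuinely positive step, while for $\beta>1$ only the degenerate choice $\alpha=0$ is guaranteed by the statement as written --- a caveat the paper sidesteps by silently assuming $\beta\in(0,1)$ at the end. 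What the paper's quantitative expansion buys in exchange is an explicit lower bound on admissible step lengths in terms of $s^TH_js$ and $K^j$, which is the kind of control later exploited in the superlinear-rate analysis to show that full steps $\alpha_k=1$ are eventually accepted; your limit argument establishes existence of a positive step but gives no such uniform estimate.
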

where $s(x)$ and $\vartheta(x)$ are given in the equation $(\ref{dd})$ and $(\ref{ov})$ respectively.
\begin{proof}
	By $(\ref{Thm 3.2})$, we have $\vartheta(x)\leq 0,$ and
		$$	\vartheta(x)=\max_{j\in\Lambda}\max_{i\in\bar{\Lambda}}  \{\zeta(x,\xi_i)+\nabla \zeta_j(x,\xi_i)^Ts(x) +\frac{1}{2}s(x)^TH_j(x,\xi_i)s(x)-F_j(x)\}.$$
	Now we consider an auxiliary function $F''^*_j(x,\alpha v)$ to find the step length size which is given by\begin{equation}\label{8}
		F''^*_j(x,\alpha s)= \max_{i\in\bar{\Lambda}}\{ \zeta_j(x,\xi_i) + \alpha\nabla \zeta_j(x,\xi_i)^Ts+\frac{1}{2}\alpha^2s^T H_j(x,\xi_i)^Ts \}-F_j(x)-F'_j(x,\alpha s),
	\end{equation}
	where $\alpha \in [0,\epsilon]$ and $\epsilon<1$. For every $j\in \Lambda$, (\ref{8}) can be written as
	$$F''^*_j(x,\alpha s) +F'_j(x,\alpha s)\leq \displaystyle\max_{i\in \bar\Lambda} \zeta_j(x,\xi_i) + \displaystyle\max_{i\in \bar\Lambda}\{\alpha \nabla \zeta_j(x,\xi_i)^Ts +\frac{1}{2}\alpha^2s^T H_j(x,\xi_i)^Ts\}-F_j(x),$$
	which implies
	\begin{equation}\label{9}
		F''^*_j(x,\alpha s)+F'_j(x,\alpha s) \leq\displaystyle\max_{i\in \bar\Lambda}\{\alpha \nabla \zeta_j(x,\xi_i)^Ts +\frac{1}{2}\alpha^2s^T H_j(x,\xi_i)^Ts\}.
	\end{equation}
	Since $\zeta_j(x,\xi_i)$, is twice continuously differentiable and convex. Then $\zeta_j(x,\xi_i)$ will be upper uniformly twice continuously differentiable (see \citet{bazaraa1982algorithm}, for the definition of uniformly differentiable function) and convex. Hence there exists $k^j_i>0$ such that
	\begin{center}
		$	\zeta_j(x+s,\xi_i) \leq \zeta_j(x,\xi_i)+\nabla \zeta_j(x,\xi_i)^Ts + \frac{1}{2} s^T H_j(x,\xi_i)^Ts +\frac{1}{3!}k^j_i\|s\|^3 $
	\end{center}
	since above inequality holds for every $s\in \mathbb{R}^n$ then it will be hold for $\alpha s$. Therefore,
	\begin{align*}
		\zeta_j(x+\alpha s,\xi_i) &\leq \zeta_j(x,\xi_i)+ \alpha\nabla \zeta_j(x,\xi_i)^Ts + \frac{1}{2}\alpha^2 s^T  H_j(x,\xi_i)^Ts +\frac{1}{3!}k^j_i\|\alpha s\|^3\\
		&\leq\max_{i\in \bar\Lambda}\{ \zeta_j(x,\xi_i)+ \alpha\nabla \zeta_j(x,\xi_i)^Ts + \frac{1}{2}\alpha^2 s^T H_j(x,\xi_i)^Ts +\frac{1}{3!}k^j_i\|\alpha s\|^3\}\\
		& \leq \max_{i\in \bar\Lambda}\{\zeta_j(x,\xi_i)+\alpha\nabla \zeta_j(x,\xi_i)^Ts + \frac{1}{2}\alpha^2 s^T H_j(x,\xi_i)^Ts\} +\frac{1}{3!}\alpha^3K^j\|s\|^3,
	\end{align*}
	where $K^j= \displaystyle \max_{i\in\bar{\Lambda}}{k^j_i}$. Now from equation (\ref{8}), we have
	\begin{equation*}
		\zeta_j(x+\alpha s,\xi_i) \leq 	F'_j(x,\alpha s) + F_j(x)+F''^*_j(x,\alpha s)+\frac{1}{3!} \alpha^3K^j \|s\|^3.
	\end{equation*}
Since above inequality hold for each $i\in \bar\Lambda$, therefore,
	\begin{equation}\label{z5}
		\max_{i\in\bar{\Lambda}}\zeta_j(x+\alpha s,\xi_i) \leq 	F'_j(x,\alpha s) + F_j(x)+F''^*_j(x, \alpha s)	+\frac{1}{3!} \alpha^3K^j \|s\|^3.
	\end{equation}
	Now, by (\ref{9})
	\begin{align}\label{3.15}
		F''^*_j(x,\alpha s)+F'_j(x,s) &\leq\displaystyle\max_{i\in \bar\Lambda}\{\alpha \nabla \zeta_j(x,\xi_i)^Ts +\frac{1}{2}\alpha^2s^TH_j(x,\xi_i)s\}\nonumber\\
	&	=\max_{i\in \bar\Lambda}\{\alpha\zeta_j(x,\xi_i)+\alpha \nabla \zeta_j(x,\xi_i)^Ts +\frac{1}{2}\alpha^2s^T  H_j(x,\xi_i)s-\alpha F_j(x)\}\nonumber\\
		&\leq\displaystyle\max_{j\in \Lambda}\max_{i\in \bar\Lambda}\{\alpha\zeta_j(x,\xi_i)+\alpha \nabla \zeta_j(x,\xi_i)^Ts +\frac{1}{2}\alpha^2s^T  H_j(x,\xi_i)s-\alpha F_j(x)\}\nonumber\\
		&\leq\alpha\max_{j\in\Lambda}\max_{i\in\bar{\Lambda}}  \{\zeta_{j}(x)+\nabla \zeta_j(x,\xi_i)^Ts +\frac{1}{2}s^TH_j(x,\xi_i)s-F_{j}(x)\}\nonumber\\&+\frac{\alpha(\alpha-1)}{2}\max_{j\in\Lambda}\max_{i\in\bar{\Lambda}}s^T H_j(x,\xi_i)^Ts\nonumber\\
		&\leq\alpha \theta (x)	+ \frac{1}{2}\alpha(\alpha-1) \displaystyle\displaystyle \max_{j\in \Lambda} \max_{i\in \bar{\Lambda}}  \{s^T H_j(x,\xi_i)^Ts\}.
	\end{align}
	Since $x$ is not a critical point then by Theorem \ref{Thm 3.2},
	 $(\ref{9})$, $(\ref{z5})$ and $(\ref{3.15}),$  we get
	\begin{equation}\label{11}
		F_j(x+ \alpha v)\leq F_j(x)+ \alpha \vartheta (x)	+ \frac{1}{2}\alpha(\alpha-1) \displaystyle\displaystyle \max_{j\in \Lambda} \max_{i\in \bar{\Lambda}}  \{s^T H_j(x,\xi_i)s\}+\frac{1}{3!}\alpha^3 K^j \|s\|^3.
	\end{equation}
	Since for each $i\in\bar{\Lambda}$ and $j\in\Lambda,$ $H_j(x,\xi_i)$ is positive definite  $\forall x\in\mathbb{R}^n.$ Then $s^T H_j(x,\xi_i)^Ts>0$, $0\not=s\in\mathbb{R}^n$.
	As $\alpha<1$, then the third term of R.H.S of $(\ref{11}),$ will be negative. Then $(\ref{11}),$ can be written as
	\begin{equation*}
		F_j(x+\alpha s)\leq  F_j(x)+ \alpha \vartheta (x)+ \alpha^3 \frac{1}{3!} K^j \|s\|^3.
	\end{equation*}
	If $\alpha>0$ is sufficiently small implies $\alpha^3\approxeq0$. Then, we have
	\begin{equation}\label{12*}
	F_j(x+\alpha s)\leq F_j(x)+ \alpha \vartheta (x)
	\end{equation}
	and also for some $\beta \in (0,1)$ we get
	\begin{equation}\label{12}
		\vartheta(x) < \beta \vartheta(x).
	\end{equation}
 Now, from (\ref{12*}) and (\ref{12}) we get
	\begin{equation}\label{18''}
		F_j(x+\alpha s)\leq  F_j(x)+  \alpha \beta \vartheta(x) .
	\end{equation}
	The inequality $(\ref{18''})$ represents the Armijo type inexact line search for quasi Newton's descent algorithm for the $RC^*_1$.
\end{proof}
\subsection{A modified BFGS update of quasi newton method for \texorpdfstring{$RC^*_1$}{Lg}}\label{ss3.3}
The BFGS update formula proposed by {\v{Z}}iga \citet{povalej2014quasi} is generally applicable for smooth multiobjective optimization problems. However, in the case of $RC^*_1$, which is a nonsmooth multiobjective optimization problem, there is no guarantee that the BFGS update (\ref{bfgsformop}) satisfies the curvature condition. Additionally, using the BFGS method to update $H_j(x^k, \xi_i)$ may fail to maintain the positive definiteness of $H_j(x^{k+1}, \xi_i)$.

Therefore, we need to modify the BFGS update (\ref{bfgsformop}) for $RC^*_1$. To do this, we introduce the concept of $H_j(x^{k+1}, \xi_i)$ for the problem $RC^*_1$. We assume that $H_j(x^{k}, \xi_i)$ is a symmetric positive definite matrix for each $\xi_i \in U$ and $j \in \Lambda$. Instead of using $p^k$ as in the traditional BFGS update, we consider a vector of the form:
 $$\eta^k=\sigma p^k+(1-\sigma)H(x^k,\xi_i)u^k,$$ $0 \leq\sigma \leq1$ i.e., closest to $p_k$ subject to the condition \begin{equation}\label{curvature}
	{u^k}^T\eta^k\geq0.2{u^k}^TH(x^k,\xi_i)u^k,
\end{equation}
where $u^k= x^{k+1}-x^k=\alpha_kv^k,$  $p^k=\nabla \zeta_j(x^{k+1},\xi_i)-\nabla \zeta_j(x^{k},\xi_i),$ $\alpha_{k}$ satisfy $(\ref{18''})$ at $x^k$ and the factor $0.2$ chosen based on empirical evidence. Thus the value of $\sigma$ is given by
\begin{equation}
	\sigma =
	\left\{
	\begin{array}{lr}
		1, & \text{if } {u^k}^Tp^k \geq 0.2{u^k}^TH(x^k,\xi_i)u^k,\\
		\frac{0.8{u^k}^TH(x^k,\xi_i)u^k}{{u^k}^TH(x^k,\xi_i)u^k-{u^k}^Tp^k}, & \text{if } {u^k}^Tp^k < 0.2{u^k}^TH(x^k,\xi_i)u^k
	\end{array}
	\right\}.
\end{equation}
Thus the new modified BFGS update formula for $RC^*_1$ is given by
\begin{equation}\label{*1}
	H_j(x^{k+1},\xi_i)=H_j(x^k,\xi_i)-\frac{H_j(x^k,\xi_i)u^k(u^k)^TH_j(x^k,\xi_i)}{(u^k)^TH_j(x^k,\xi_i)u^k}+\frac{\eta^k(\eta^k)^T}{(u^k)^T \eta^k}.
\end{equation}
After multiplying (\ref{*1}) by $u^k$ we get
$$H_j(x^{k+1},\xi_i)u^k=H_j(x^k,\xi_i)u^k-\frac{H_j(x^k,\xi_i)u^k(u^k)^TH_j(x^k,\xi_i)u^k}{(u^k)^TH_j(x^k,\xi_i)u^k}+\frac{\eta^k{u^k}^T\eta^k}{(u^k)^T \eta^k},$$
which implies,
 \begin{equation}\label{**1}
	H_j(x^{k+1},\xi_i)u^k=\eta^k.
	\end{equation}
Equation (\ref{**1}), represents the quasi Newton condition or secant equation. After multiply  (\ref{**1}) by $u^k$ and with the help of (\ref{curvature}) we get
\begin{equation}\label{***1}
	{u^k}^TH_j(x^{k+1},\xi_i)u^k={u^k}^T\eta^k>0.
	\end{equation}
Equation (\ref{***1}) represents the curvature condition which ensures that  $H_j(x^{k+1},\xi_i)$ is positive definite for each $\xi_i$ and $j$. Thus, we have new modified BFGS update formula (\ref{*1}) for Hessian update which satisfies quasi Newton condition and curvature condition.
\par With the derived quasi Newton descent direction, step length size, and the modified BFGS update formula for updating the Hessian, we are now equipped to present the quasi Newton algorithm for solving $RC^*_1$. Using these components, we can outline the steps of the algorithm as follows:
%
\begin{alg1}\label{algo1}\rm(\textbf{Quasi Newton algorithm for $RC^*_1$})
		
		\begin{enumerate}[{Step} 1]
			\item Choose $\epsilon>0$, $\beta \in (0,1) $   and $x^0\in \mathbb{R}^n$ and  $H_j(x^0,\xi_i)$ is symmetric and positive definite matrix for each $\xi_i$ and $j$. Set $k :=0.$
			\item\label{1b} Solve $Q_N(x^k)$ at $x^k$ to obtain $s^k=s(x^k)$ and $\vartheta(x^k)$. If $|\vartheta(x^k)|<\epsilon$ or $\|s(x^k)\|<\epsilon$ then stop. Otherwise, proceed to Step \ref{step2}.
			\item\label{step2} Choose $\alpha_k$ as the largest $\alpha \in \{ \frac{1}{2^r} : r=1,2,3,\ldots\},$ which satisfies (\ref{18''}).
			\item\label{2c} Set $x^{k+1}:= x^k + \alpha_{k} s^k$. Update: Generate a positive definite matrix $H_j(x^{k+1},\xi_i)$ with the help of $(\ref{*1})$. Set $k:=k+1$, go to Step $\ref{1b}$ and repeat this process until we do not reach the stopping criteria.
		\end{enumerate}
\end{alg1}
Upon examining Algorithm \ref{algo1}, we can observe that at any given point $x^k \in \mathbb{R}^n$, $Q_N(x^k)$ is a convex programming quadratic subproblem due to the positive definiteness of $H_j(x^k, \xi_i)$ for each $j$ and $i$. As a result, the unique solution of $Q_N(x^k)$, denoted as $s(x^k)$, exists, ensuring that Step \ref{1b} is well-defined. According to Theorem \ref{Thm 3.2}, the stopping criterion for Algorithm \ref{algo1} can be set as $\|s(x^k)\| < \epsilon$ or $|\vartheta(x^k)| < \epsilon$. If, at iteration $k$, the stopping criterion is not met (i.e., $x^k$ is not an approximate critical point), we proceed to Step \ref{step2}. In this step, we find $\alpha_k$ such that $\alpha_k$ is the largest $\alpha \in \{\frac{1}{2^r} : r = 1, 2, 3, \ldots\}$ satisfying equation (\ref{18''}). Next, we calculate $x^{k+1} = x^k + \alpha_k s^k$ in Step \ref{2c}, and update $H_j(x^k, \xi_i)$ to obtain $H_j(x^{k+1}, \xi_i)$ using equation (\ref{*1}). The updated $H_j(x^{k+1}, \xi_i)$ is a symmetric and positive definite matrix for each $j$ and $\xi_i$. Finally, we set $k := k + 1$ and return to Step \ref{1b}, repeating this process until an approximate critical point is reached. There is one more thing to know about Algorithm \ref{algo1}. When we reach an approximative critical point after reaching the stopping criteria, that critical point will be the approximative weak Pareto optimal solution of $RC_1^*.$
\subsection{Convergence analysis of algorithm \ref{algo1}}\label{ss3.4}
Initially, we prove Lemma \ref{lm2}, which helps to prove the convergence of the sequence generated by Algorithm \ref{lm2}.
\begin{lem}\label{lm2} If $\vartheta(x)$ is given by (\ref{dd}), then
	for all $k=0,1,2,\ldots$ and $j\in\Lambda$,
	$$	\sum\limits^k_{r=0}\alpha_{r}\vert\vartheta(x^r)\vert\leq \beta^{-1}(F_{j}(x^0)-F_{j}(x^{k+1})),$$
	where  $F_{j}(x^k)=\displaystyle \max_{i \in \bar \Lambda} \zeta_j(x^k,\xi_i).$
\end{lem}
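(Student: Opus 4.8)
The plan is to derive the bound directly from the Armijo-type inexact line search inequality~(\ref{18''}) and then telescope over the iteration index. First I would fix an arbitrary $j\in\Lambda$ and recall that at each iteration $k$ the step length $\alpha_k$ produced in Step~\ref{step2} of Algorithm~\ref{algo1} is, by construction, chosen to satisfy~(\ref{18''}) at $x^k$ along the quasi Newton direction $s^k=s(x^k)$. Since $x^{k+1}=x^k+\alpha_k s^k$, this reads
\begin{equation*}
F_j(x^{k+1})=F_j(x^k+\alpha_k s^k)\leq F_j(x^k)+\alpha_k\beta\,\vartheta(x^k).
\end{equation*}

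The key sign observation is that by Theorem~\ref{Thm 3.2} we have $\vartheta(x)\leq 0$ for every $x$, so $\vartheta(x^k)=-|\vartheta(x^k)|$. Substituting this and rearranging the inequality above gives, for each $r=0,1,\dots,k$,
\begin{equation*}
\alpha_r\beta\,|\vartheta(x^r)|\leq F_j(x^r)-F_j(x^{r+1}),
\end{equation*}
equivalently $\alpha_r|\vartheta(x^r)|\leq\beta^{-1}\bigl(F_j(x^r)-F_j(x^{r+1})\bigr)$. Summing this one-step estimate over $r=0$ through $k$ and using that the right-hand side telescopes, the intermediate terms cancel and only the endpoints survive, yielding
\begin{equation*}
\sum_{r=0}^{k}\alpha_r|\vartheta(x^r)|\leq\beta^{-1}\sum_{r=0}^{k}\bigl(F_j(x^r)-F_j(x^{r+1})\bigr)=\beta^{-1}\bigl(F_j(x^0)-F_j(x^{k+1})\bigr),
\end{equation*}
which is exactly the claimed bound.

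There is no substantive obstacle here; the argument is a routine accumulation of the per-step descent guarantee. The only point requiring care is the sign bookkeeping: one must use $\vartheta(x)\leq 0$ (from Theorem~\ref{Thm 3.2}) to replace $\vartheta(x^k)$ by $-|\vartheta(x^k)|$ so that the left-hand side assembles into the nonnegative sum $\sum_r\alpha_r|\vartheta(x^r)|$ rather than $\sum_r\alpha_r\vartheta(x^r)$. I would also note, as a remark the paper will presumably exploit later, that the bound holds simultaneously for every $j\in\Lambda$ and, combined with the fact that $F_j(x^{k+1})$ is monotonically nonincreasing along the sequence, it shows the series $\sum_r\alpha_r|\vartheta(x^r)|$ is bounded whenever $\{F_j(x^k)\}$ is bounded below; this boundedness is the quantity that drives the critical-point convergence of Algorithm~\ref{algo1}.
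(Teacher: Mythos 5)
Your proof is correct and follows essentially the same route as the paper: start from the Armijo-type inequality (\ref{18''}) guaranteed by Step~\ref{step2}, use $\vartheta(x^r)\leq 0$ from Theorem~\ref{Thm 3.2} to convert $-\vartheta(x^r)$ into $|\vartheta(x^r)|$, and telescope the per-step descent over $r=0,\dots,k$. Your sign bookkeeping is in fact cleaner than the paper's, which passes through a somewhat redundant pair of inequalities before arriving at the same conclusion.
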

\begin{proof}
Assume that $\{x^k\}$ is a sequence generated by Algorithm \ref{algo1}. Then, by Step~\ref{step2} of Algorithm \ref{algo1}
\begin{equation*}
	-\alpha_k  \vartheta(x^k)	\leq \beta^{-1}(F_j(x^{k}) -F_j(x^{k+1})).
\end{equation*}
Thus we have
\begin{equation*}\label{19'}
	-\sum\limits^k_{r=0}\alpha_r \vartheta(x^r)\leq \beta^{-1}(F_j(x^{0}) -F_j(x^{k+1})).
\end{equation*}
Also
\begin{equation}\label{19'}
	\sum\limits^k_{r=0}\alpha_r \vartheta(x^r)\geq -\beta^{-1}(F_j(x^{0}) -F_j(x^{k+1})).
\end{equation}
Since, we know $\vartheta(x^r)<0,$ then equation $(\ref{19'})$, can be written as
\begin{equation}\label{20'}
	\beta^{-1}(F_j(x^{0}) -F_j(x^{k+1}))	\geq \sum\limits^k_{r=0}\alpha_r \vartheta(x^r).
\end{equation}
Now, from $(\ref{19'})$ and $(\ref{20'})$
\begin{equation}\label{lll}
	\sum\limits^k_{r=0}\alpha_r \vert\vartheta(x^r)\vert\leq  \beta^{-1}(F_j(x^{0}) -F_j(x^{k+1}))
\end{equation}
hence the lemma.
\end{proof}
\begin{thm}\label{3.5}\rm
Let $s(x)$ and $\vartheta(x)$ be given by (\ref{dd}) and (\ref{ov}), respectively, which represent the optimal value and optimal solution of the subproblem $Q_N(x)$. At any given point $x^k\in \mathbb{R}^n$, in the direction $s(x^k)$ the Armijo type inexact line search is given by
\begin{equation}\label{18'''}
	F_j(x^k+\alpha s^k)\leq  F_j(x^k)+  \alpha_k \beta \vartheta(x^k).
\end{equation}
Then
\begin{enumerate}
	\item\label{c2} if $\{x^k\}$ is a sequence generated by Algorithm \ref{algo1}. Then accumulation point of $\{x^k\}$ is a Pareto point for $F$;
\item\label{c3} moreover, if $x^0$ lies in a compact level-set of $F$, and $\{x^k\}$ be a sequence generated by  Algorithm $\ref{algo1}$. Then $\{x^k\}$ converges to a Pareto optimum $x^*\in \mathbb{R}^n$ for $F$ for the problem $RC^*_1$.
\end{enumerate}
\end{thm}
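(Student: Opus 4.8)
The plan is to establish claim (\ref{c2}) by an asymptotic-vanishing argument combined with a backtracking contradiction, and then to bootstrap (\ref{c3}) from it using strict convexity. First I would note that the Armijo rule (\ref{18'''}) together with $\vartheta(x^k)\le 0$ (Theorem \ref{Thm 3.2}) makes each $F_j$ non-increasing along $\{x^k\}$, since $F_j(x^{k+1})\le F_j(x^k)+\alpha_k\beta\vartheta(x^k)\le F_j(x^k)$. If $\bar x$ is an accumulation point with $x^{k_l}\to\bar x$, continuity of $F_j$ gives $F_j(x^{k_l})\to F_j(\bar x)$, so the monotone sequence $\{F_j(x^k)\}$ is bounded below and converges; substituting this into Lemma \ref{lm2} yields $\sum_{r\ge0}\alpha_r|\vartheta(x^r)|<\infty$ and hence $\alpha_k|\vartheta(x^k)|\to0$.

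Next, to prove (\ref{c2}) I would argue by contradiction. Suppose $\bar x$ is not critical; then $\vartheta(\bar x)<0$ by Theorem \ref{Thm 3.2}, and continuity of $\vartheta$ forces $\vartheta(x^{k_l})\to\vartheta(\bar x)<0$, so $|\vartheta(x^{k_l})|$ stays bounded away from $0$ and $\alpha_k|\vartheta(x^k)|\to0$ compels $\alpha_{k_l}\to0$. By Theorem \ref{Thm 3.2}(\ref{th0}) the directions $s^{k_l}=s(x^{k_l})$ are bounded on the compact set containing $\{x^{k_l}\}$, so after refining the subsequence $s^{k_l}\to\bar s$; passing to the limit in $\vartheta(x^{k_l})=\Theta_{x^{k_l}}(s^{k_l})$ and using uniqueness of the minimizer of the strongly convex $\Theta_{\bar x}$ (here also passing to a subsequence along which the $H_j(x^{k_l},\xi_i)$ converge, via their uniform positive-definiteness and boundedness) identifies $\bar s=s(\bar x)$. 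Since $\alpha_{k_l}\to0$, for large $l$ the doubled trial step $2\alpha_{k_l}$ was rejected by the line search, so for some index $j_0$ held fixed along a further subsequence ($\Lambda$ being finite),
$$\frac{F_{j_0}(x^{k_l}+2\alpha_{k_l}s^{k_l})-F_{j_0}(x^{k_l})}{2\alpha_{k_l}}>\beta\,\vartheta(x^{k_l}).$$
Letting $l\to\infty$ sends the left-hand side to the directional derivative $F'_{j_0}(\bar x,\bar s)=\max_{i\in I_{j_0}(\bar x)}\nabla\zeta_{j_0}(\bar x,\xi_i)^T\bar s$ (Theorem \ref{directional derivative}), giving $F'_{j_0}(\bar x,\bar s)\ge\beta\vartheta(\bar x)$. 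On the other hand, for each active $i\in I_{j_0}(\bar x)$ the constraint of $Q_N(\bar x)$ at the optimum reads $\nabla\zeta_{j_0}(\bar x,\xi_i)^T s(\bar x)+\tfrac12 s(\bar x)^TH_{j_0}(\bar x,\xi_i)s(\bar x)\le\vartheta(\bar x)$, and since $s(\bar x)\neq0$ and $H_{j_0}$ is positive definite the quadratic term is strictly positive, whence $F'_{j_0}(\bar x,s(\bar x))<\vartheta(\bar x)<\beta\vartheta(\bar x)$ (the last inequality because $\vartheta(\bar x)<0$ and $\beta\in(0,1)$), contradicting the previous display. Hence $\vartheta(\bar x)=0$ and $\bar x$ is a critical, i.e. Pareto, point.

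For claim (\ref{c3}), since $x^0$ lies in the compact level set $\mathcal L=\{x:F(x)\le F(x^0)\}$ and each $F_j$ is non-increasing, the whole sequence stays in $\mathcal L$; it is therefore bounded and its accumulation points are all critical by (\ref{c2}), hence weakly efficient for $RC^*_1$ by Theorem \ref{nce}. To collapse the limit set to a single point I would use strict convexity: each $F_j=\max_i\zeta_j(\cdot,\xi_i)$ is strictly convex (a maximum of finitely many strictly convex functions), and the monotone limits $F_j(x^k)\to F_j^*$ force every accumulation point to attain the common value $F^*$. If $y\neq z$ were two accumulation points with $F(y)=F(z)=F^*$, strict convexity would give $F_j\big(\tfrac{y+z}{2}\big)<F_j^*$ for every $j$, so the midpoint strictly dominates $y$, contradicting the weak efficiency of $y$; thus the accumulation point is unique, and a bounded sequence with a unique accumulation point $x^*$ converges to it, with $x^*$ a Pareto optimum for $RC^*_1$. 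I expect the genuine obstacle to lie in the second paragraph: rigorously passing to the limit in the backtracking difference quotient (where the base point, the direction, and the step size all vary simultaneously) and securing $\bar s=s(\bar x)$, which additionally requires the quasi-Newton matrices $H_j(x^k,\xi_i)$ to have eigenvalues uniformly bounded and bounded away from zero, a property that must be supplied by the modified BFGS update rather than merely assumed.
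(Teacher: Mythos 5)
Your proposal is correct and shares the same skeleton as the paper's proof (monotonicity from the Armijo rule, Lemma \ref{lm2} giving $\sum_k\alpha_k|\vartheta(x^k)|<\infty$, a contradiction argument at a noncritical accumulation point, then compactness of the level set for the second claim), but it diverges in two places worth noting. For item (\ref{c2}) the paper does not send $\alpha_{k_l}\to 0$ and pass to the limit in a rejected-step difference quotient; instead it fixes a dyadic step $2^{-q}$ that is strictly acceptable at the limit point $x^*$ (via the auxiliary function $G(u)=\max_j u_j$), uses continuity of $\vartheta$ and of $x\mapsto s(x)$ to conclude that $2^{-q}$ is acceptable at $x^{k_l}$ for large $l$, hence $\alpha_{k_l}\ge 2^{-q}$, so $\alpha_{k_l}\vartheta(x^{k_l})$ stays bounded away from zero, contradicting summability. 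Your route instead needs the limit of the difference quotient $\bigl(F_{j_0}(x^{k_l}+2\alpha_{k_l}s^{k_l})-F_{j_0}(x^{k_l})\bigr)/(2\alpha_{k_l})$ to land on the directional derivative $F'_{j_0}(\bar x,\bar s)$ with base point, direction and step all varying; this is legitimate because $F_{j_0}$ is a finite max of $C^1$ functions (hence locally Lipschitz and Clarke-regular), but it is an extra analytic step the paper's argument avoids. Both versions rely on $s(x^{k_l})\to s(\bar x)$, which in turn needs the quasi-Newton matrices to have eigenvalues uniformly bounded above and away from zero; the paper simply asserts this continuity, while you flag it explicitly, which is the more honest treatment. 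For item (\ref{c3}) your explicit midpoint argument (two distinct accumulation points at the common limiting value $F^*$ would make their midpoint strictly dominate both, contradicting weak efficiency) actually completes the uniqueness claim that the paper only gestures at by citing Lemma 3.8 of \citet{fukuda2014survey} and invoking strong convexity; in this respect your write-up is more self-contained than the original.
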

\begin{proof}
	To prove \ref{c2}, we assume  $x^*$ is an accumulation point  of the sequence $\{x^k\}$ generated by Algorithm $\ref{algo1}$. Then there exists a subsequence $\{x^{k_l \}}$ such that
\begin{equation*}
	\lim_{l\to\infty}x^{k_l}  = x^*.
\end{equation*}
Using the lemma \ref{lm2}, with $l = k_l$ and taking the limit $l\to\infty$ from the continuity of $F_{j}$\\
\begin{equation*}
	\sum\limits^\infty_{l=0} \alpha_l |\vartheta(x^l)| < \infty.
\end{equation*}
Therefore
\begin{equation*}
	\lim_{k\to\infty}\alpha_{k} \vartheta(x^k)  = 0.
\end{equation*}
And in particular, we have
\begin{equation}\label{18****}
	\lim_{l\to\infty}\alpha_{k_l} \vartheta(x^{k_l})  = 0.
\end{equation}
Suppose that $x^*$ is non critical point, then
\begin{equation}\label{19****}
	\vartheta(x^*) < 0 ~~and ~~s^*\not=0.
\end{equation}
Now define
\begin{equation*}
	G:\mathbb{R}^m \to \mathbb{R},  ~G(u)=\displaystyle \max_{j\in \Lambda}u_j.
\end{equation*}
Using the definition of $\vartheta(.)$ and descent direction $s^*$, we can conclude that there exists a non negative integer $q$ such that
\begin{equation*}
	G(F(x^*+2^{-q}s^*)-F(x^*))<\beta 2^{-q}\vartheta(x^*).
\end{equation*}
Since $G(.)$ and $\vartheta(x)$ are continuous, then
\begin{equation*}
	\lim_{l\to\infty}s^{k_l}=s^*, ~~~\lim_{l\to\infty} \vartheta(x^{k_l})=\vartheta(x^*).
\end{equation*}
And for $l$ large enough
\begin{equation}
	G(F(x^{k_l}+2^{-q}s^{k_l})-F(x^{k_l}))<\beta 2^{-q}\vartheta(x^{k_l}),
\end{equation}
which implies
\begin{equation}\label{25'}
	F_j(x^{k_l}+2^{-q}s^{k_l})-F_j(x^{k_l})<\beta 2^{-q}\vartheta(x^{k_l}).
\end{equation}
By the Algorithm $\ref{algo1}$,  $Step ~\ref{step2}$ we have
\begin{equation}\label{26'}
	F_j(x^{k_l}+\alpha_{k_l}s^{k_l})\leq F_j(x^{k_l})+\beta \alpha_{k_l}\vartheta(x^{k_l}).
\end{equation}
Since $\alpha_{k_l}$ is the largest of $\{\frac{1}{2^n}:n=1,2,3,....\}$, and $\sigma \in (0,1)$. Then by equation $(\ref{25'})$ and $(\ref{26'})$ we have
\begin{equation}
	\beta \alpha_{k_l}\vartheta(x^{k_l}) \leq \beta 2^{-q}\vartheta(x^{k_l}).
\end{equation}
Which implies
\begin{equation}
	\alpha_{k_l} \geq 2^{-q}>0, ~~\text{for}~l~\text{large enough.}
\end{equation}
Hence
\begin{equation}
	\lim_{l\to\infty}\alpha_{k_l} \vartheta(x^{k_l}) > 0.
\end{equation}
Which contradicts the equation $(\ref{18****})$. Therefore, the condition in equation $(\ref{19****})$ is not true. Thus $x^*$ is a critical point. And  $F$ is strictly convex, so $x^*$ is a Pareto optimum, hence the item \ref{c2}. \\
Now to prove the item \ref{c3}, let $L_0$ be the $F(x^0)$-level set of  $F$, i.e.,
\begin{equation*}
	L_0=\{x\in \mathbb{R}^n:F(x)\leq F(x^0)\}.
\end{equation*}
Since the sequence ${F(x^k)}$ is $\mathbb{R}^m$-decreasing, we have $x^k \in L_0$ for all $k$. Therefore, $\{x^k\}$ is bounded, and all its accumulation points lie in $L_0$. By applying the above theorem, we can conclude that these accumulation points are all Pareto optima for $F$.
Furthermore, since for every $x^k \in L_0$ we have $F(x^{k+1}) \leq F(x^k),$ we can use Lemma 3.8 in \citet{fukuda2014survey} to show that for any accumulation point $x^*$ of $\{x^k\}$, we have $F(x^*) \leq F(x^k)$ and $\lim_{k \to \infty} F(x^k) = F(x^*)$. Additionally, $F(x)$ is constant among the accumulation points of $\{x^k\}$.
Since $F$ is strongly convex, meaning it is strictly $\mathbb{R}^m$-convex, there exists at least one accumulation point of $\{x^k\}$, denoted as $x^*$. Thus, the proof is complete.

\end{proof}

\subsection{Convergence rate of Algorithm \ref{algo1}}\label{ss3.5}

Under the usual assumptions, we will prove the convergence of the infinite sequence generated by Algorithm \ref{algo1} to the Pareto optimum with a superlinear rate when full quasi Newton steps are performed, i.e., $\alpha_k = 1$ for sufficiently large $k$. First, we assume a bound for the descent direction $s(x)$. Next, we prove a bound for $\vartheta(x^{k+1})$ using information from the previous iteration point $x^k$. Finally, we show that full quasi Newton steps are performed when $k$ is large enough, i.e., to say $\alpha_{k}=1$.

\begin{thm}\rm Let $x^k$ be  a sequence generated by Algorithm $\ref{algo1}$.
	\begin{enumerate}
		\item\label{cr1} If for any $k$, $\exists ~\lambda_{ij}$ such that 	$$\sum\limits_{j\in \Lambda}  \sum\limits_{i\in\bar\Lambda} \lambda_{ij}=1,$$~~~  then $$\vartheta(x^{k+1})\geq -\frac{1}{2\omega}\|\sum\limits_{j\in\Lambda}\sum\limits_{i\in \bar\Lambda}\lambda^k_{ij}\nabla \zeta_j(x^{k+1},\xi_i)\|^2$$ where $\omega>0$ defined in Definition $\ref{1**}$.
		\item\label{cr2} If we assume for any $x\in\mathbb{R}^n$,
		\begin{equation}\label{30}
			\|s(x)\|^2\leq \frac{2}{\omega}\mid\vartheta(x)\mid,
		\end{equation}
		
		where $\omega>0$ defined in Definition $\ref{1**},$
		and let $x^0\in\mathbb{R}^n$ be in compact level set of $F$. Then $\{x^k\}$ converges to a Pareto optimum point $x^*$ moreover $\alpha_k=1$ for $k$ large enough and rate of convergence of $\{x^k\}$ to  $x^*$ is superlinear.
	\end{enumerate}
\end{thm}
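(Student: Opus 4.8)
The theorem splits into an auxiliary one-step estimate (item \ref{cr1}) and the genuine convergence-rate statement (item \ref{cr2}); I would prove \ref{cr1} first and then use it as the engine for \ref{cr2}. For item \ref{cr1}, the plan is to exploit the saddle-point (dual) description of the subproblem value. Writing $\phi_{ji}(s)=\zeta_j(x^{k+1},\xi_i)+\nabla\zeta_j(x^{k+1},\xi_i)^Ts+\frac12 s^TH_j(x^{k+1},\xi_i)s-F_j(x^{k+1})$, we have $\vartheta(x^{k+1})=\min_s\max_{j,i}\phi_{ji}(s)\ge\min_s\sum_{j,i}\lambda^k_{ij}\phi_{ji}(s)$ for the given multipliers, because a maximum dominates any convex combination. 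The inner minimization is an unconstrained strongly convex quadratic in $s$ whose minimizer is the Newton-type direction $s=-H^{-1}g$ with $g=\sum_{j,i}\lambda^k_{ij}\nabla\zeta_j(x^{k+1},\xi_i)$ and $H=\sum_{j,i}\lambda^k_{ij}H_j(x^{k+1},\xi_i)$; substituting gives $\min_s\sum\lambda^k_{ij}\phi_{ji}(s)=\sum\lambda^k_{ij}\big(\zeta_j(x^{k+1},\xi_i)-F_j(x^{k+1})\big)-\frac12 g^TH^{-1}g$. The first sum is nonpositive and vanishes once the $\lambda^k_{ij}$ are supported on the active indices $I_j(x^{k+1})$ (which I would assume/verify), and the positive-definiteness $H\succeq\omega I$ inherited from Definition \ref{1**} yields $g^TH^{-1}g\le\omega^{-1}\|g\|^2$. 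Combining the two bounds gives $\vartheta(x^{k+1})\ge-\frac{1}{2\omega}\|g\|^2$, which is the assertion.

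For item \ref{cr2}, convergence of $\{x^k\}$ to a Pareto optimum $x^*$ (with $s^k\to 0$) is inherited directly from Theorem \ref{3.5} under the compact level-set hypothesis. The next step is to show the unit step is eventually accepted. Feeding $\alpha=1$ into the descent estimate \eqref{11} annihilates the middle term and leaves $F_j(x^k+s^k)\le F_j(x^k)+\vartheta(x^k)+\frac16 K^j\|s^k\|^3$; the Armijo test \eqref{18''} at $\alpha=1$ therefore holds as soon as $\frac16 K^j\|s^k\|^3\le(1-\beta)|\vartheta(x^k)|$, and the standing hypothesis $\|s(x)\|^2\le\frac{2}{\omega}|\vartheta(x)|$ reduces this to $\|s^k\|\le 3(1-\beta)\omega/K^j$, which holds for all large $k$ since $s^k\to 0$. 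Hence $\alpha_k=1$ eventually and the recursion becomes $x^{k+1}=x^k+s^k$.

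The rate itself I would obtain by chaining item \ref{cr1} with a Taylor expansion of the gradients. Using the subproblem stationarity $g^k:=\sum\lambda^k_{ij}\nabla\zeta_j(x^k,\xi_i)=-H^k s^k$, where $H^k=\sum\lambda^k_{ij}H_j(x^k,\xi_i)$, and the integral mean value theorem along $x^{k+1}=x^k+s^k$, the quantity appearing in \ref{cr1} becomes $\sum\lambda^k_{ij}\nabla\zeta_j(x^{k+1},\xi_i)=(\bar G^k-H^k)s^k$, where $\bar G^k$ is the $\lambda^k$-weighted averaged true Hessian along the step. Item \ref{cr1} together with the hypothesis then gives $\|s^{k+1}\|\le\frac1\omega\|(\bar G^k-H^k)s^k\|\le\frac1\omega\|\bar G^k-H^k\|\,\|s^k\|$, so once the Dennis--Moré type estimate $\|\bar G^k-H^k\|\to 0$ is in force we obtain $\|s^{k+1}\|=o(\|s^k\|)$; since $\|x^k-x^*\|\le\sum_{l\ge k}\|s^l\|$ is then dominated by $\|s^k\|$, this upgrades to superlinear convergence of the iterates.

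The main obstacle is precisely this Dennis--Moré step: proving that the modified BFGS updates \eqref{*1}, driven by the secant pairs $(u^k,\eta^k)$ and the curvature guarantee \eqref{***1}, render the Hessian approximations asymptotically exact along the generated directions, so that $\|\bar G^k-H^k\|\to 0$. Everything else is essentially bookkeeping built on Theorem \ref{3.5}, the estimate \eqref{11}, and item \ref{cr1}, but establishing this asymptotic accuracy for the nonsmooth, objective-wise BFGS scheme is where the real work lies.
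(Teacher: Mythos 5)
Your proposal is correct and follows essentially the same route as the paper: item 1 is obtained by bounding the max by the $\lambda^k_{ij}$-convex combination and minimizing the resulting strongly convex quadratic, unit steps are accepted by feeding $\alpha=1$ into the descent estimate, and the rate follows from the stationarity identity $\sum\lambda^k_{ij}\nabla\zeta_j(x^k,\xi_i)=-\bigl(\sum\lambda^k_{ij}H_j(x^k,\xi_i)\bigr)s^k$ plus a Taylor expansion, item 1, and hypothesis (\ref{30}), giving $\|s^{k+1}\|\leq\frac{\epsilon}{\omega}\|s^k\|$ and then the geometric-series argument. The Dennis--Mor\'e-type requirement you single out as the real work is exactly the step the paper also leaves unproved: it simply introduces an unquantified $\epsilon$ in the bound $\zeta_j(x^k+s^k,\xi_i)\leq\cdots+\frac{\epsilon}{2}\|s^k\|^2$ and treats it as arbitrarily small, which tacitly assumes the modified BFGS approximations become asymptotically exact along the directions $s^k$, so your version is, if anything, more candid about the same gap.
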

\begin{proof}
	Assume the sequence $\{x^k\}$ is generated by Algorithm $\ref{algo1}$. By $(\ref{ov})$ and $(\ref{2})$ we have
	
\begin{align*}
		\vartheta(x^{k+1})&=\sum\limits_{j\in \Lambda}  \sum\limits_{i\in\bar\Lambda} \lambda^k_{ij}\max_{j\in\Lambda}\max_{i\in \bar\Lambda}  \{\nabla \zeta_j(x^{k+1},\xi_i)^Ts(x^{k+1}) +\frac{1}{2}v(x^{k+1})^TH_j(x^{k+1},\xi_i)s(x^{k+1})\}\\
		&	\geq\sum\limits_{j\in \Lambda}  \sum\limits_{i\in\bar\Lambda} \lambda^k_{ij} \{\nabla \zeta_j(x^{k+1},\xi_i)^Ts(x^{k+1}) +\frac{1}{2}s(x^{k+1})^TH_j(x^{k+1},\xi_i)s(x^{k+1})\}\\
		&	\geq\displaystyle\min_{s}\sum\limits_{j\in \Lambda}  \sum\limits_{i\in\bar\Lambda} \lambda^k_{ij} \{\nabla \zeta_j(x^{k+1},\xi_i)^Ts +\frac{1}{2}s^TH_j(x^{k+1},\xi_i)s\}.
	\end{align*}
	Therefore,
	\begin{equation}\label{31}
		\vartheta(x^{k+1}) \geq  \displaystyle\min_{s}\sum\limits_{j\in \Lambda}  \sum\limits_{i\in\bar\Lambda} \lambda^k_{ij} \big( \nabla \zeta_j(x^{k+1},\xi_i)^Ts
		+\frac{\omega}{2}\|s\|^2 \big).
	\end{equation}
	Solving the R.H.S. of inequality $(\ref{31})$  we get
	\begin{equation}\label{32}
		\displaystyle\min_{s}\sum\limits_{j\in \Lambda}  \sum\limits_{i\in \bar\Lambda} \lambda^k_{ij} \big( \nabla \zeta_j(x^{k+1},\xi_i)^Ts
		+\frac{1}{2}\|s\|^2 \big)=-\frac{1}{2\omega}\|\sum\limits_{j\in\Lambda}\sum\limits_{i\in\bar\Lambda}\lambda^k_{ij}\nabla \zeta_j(x^{k+1},\xi_i)\|^2
	\end{equation}
	then by $(\ref{31})$ and $(\ref{32})$
	\begin{equation}\label{33}
		\vartheta(x^{k+1})\geq -\frac{1}{2\omega}\|\sum\limits_{j\in\Lambda}\sum\limits_{i\in\bar\Lambda}\lambda^k_{ij}\nabla \zeta_j(x^{k+1},\xi_i)\|^2.
	\end{equation}
	Hence the proof of item \ref{cr1}.\\
	Now we prove item $\ref{cr2}$. Since $\{x^k\}$ be a sequence  generated by Algorithm $\ref{algo1}$ whose initial point $x^0$ belongs to compact level set of $F.$ Therefore by Theorem \ref{3.5}, $\{x^k\}$ converges to $x^*.$ Here we prove $\{x^k\}$ converges to $x^*$ with superlinear rate. Since $x^*$ is critical point then by the definition there is no descent direction at $x^*$ i.e., $s(x^*)=0.$
	Since  $\zeta_{j}(x,\xi_i)$ is twice continuously differantiable for each $x$ and $\xi_{i},$ we get\\
	\begin{align*}
		\zeta_j(x^k+v^k,\xi_i)&\leq \zeta_j(x^k,\xi_i)+\nabla\zeta_j(x^k,\xi_i)^Ts^k+\frac{1}{2}(s^k)^TH_j(x^k,\xi_i)(s^k)+\frac{\epsilon}{2}\|s^k\|^2\\
		&\leq \displaystyle \max_{i\in \bar\Lambda}\{\zeta_j(x^k,\xi_i)+\nabla\zeta_j(x^k,\xi_i)^Ts^k+\frac{1}{2}(s^k)^TH_j(x^k,\xi_i)(s^k)+\frac{\epsilon}{2}\|s^k\|^2\}\\
		&\leq \displaystyle \max_{i\in \bar\Lambda}\zeta_j(x^k,\xi_i)+\displaystyle \max_{j\in \Lambda}\max_{i\in \bar\Lambda}\{\nabla\zeta_j(x^k,\xi_i)^Ts^k+\frac{1}{2}(s^k)^TH_j(x^k,\xi_i)(s^k)\}+\frac{\epsilon}{2}\|s^k\|^2.
	\end{align*}
	Then
	\begin{align*}
		&\zeta_j(x^k+s^k,\xi_i)\leq F_j(x^k)+\vartheta(x^k)+\frac{\epsilon}{2}\|s^k\|^2~ \text{holds for each}~ \xi_i.
	\end{align*}
 Then
	\begin{align*}
		&\displaystyle\max_{i\in \bar\Lambda} \zeta_j(x^k+s^k,\xi_i)\leq F_j(x^k)+\vartheta(x^k)+\frac{\epsilon}{2}\|s^k\|^2
	\end{align*}
	\begin{align*}
	\text{and}~~~~	&F_j(x^k+s^k) -F_j(x^k)\leq\vartheta(x^k)+\frac{\epsilon}{2}\|s^k\|^2.
	\end{align*}
	Then by $(\ref{30})$,
	\begin{align*}
		F_j(x^k+s^k) -F_j(x^k)&\leq\sigma\theta(x^k)+(1-\sigma)\vartheta(x^k)+\frac{\epsilon}{2}\|s^k\|^2\\
		&\leq\sigma\vartheta(x^k)+(\epsilon-\omega(1-\sigma))\frac{\|s^k\|^2}{2},~ \text{for all} ~k\geq k_\epsilon.
	\end{align*}
	This inequality shows that if $\epsilon<\omega(1-\sigma)$, then by Algorithm $\ref{algo1}$, in $Step ~\ref{step2}$, ~$\alpha_{k}=1$ is accepted for $k\geq k_\epsilon.$\\
	For superlinear convergence suppose that $ \epsilon<\omega(1-\sigma).$ Use the Taylor's first order expansion of $\sum\limits_{j\in\Lambda}\sum\limits_{i\in \bar{\Lambda}}\lambda^k_{ij}\nabla \zeta_j(x^{k+1},\xi_i)$ at $x^k$, in $s^k$, we conclude that for any $k\geq k_\epsilon$, it holds
	
	$$ \| \sum\limits_{j\in\Lambda}\sum\limits_{i\in \bar{\Lambda}}\lambda^k_{ij}\nabla \zeta_j(x^{k+1},\xi_i)\|=\|\sum\limits_{j\in\Lambda}\sum\limits_{i\in \bar{\Lambda}}\lambda^k_{ij}\nabla \zeta_j(x^{k+1},\xi_i)-0\|.$$
	By  $(\ref{3}),$
	\begin{align*}
		\| \sum\limits_{j\in\Lambda}\sum\limits_{i\in \bar{\Lambda}}\lambda^k_{ij}\nabla \zeta_j(x^{k+1},\xi_i)\|&=\|\sum\limits_{j\in\Lambda}\sum\limits_{i\in \bar{\Lambda}}\lambda^k_{ij}\nabla \zeta_j(x^{k+1},\xi_i)-\big(\sum\limits_{j\in \Lambda}  \sum\limits_{i\in \bar{\Lambda}} \lambda^k_{ij} \nabla \zeta_j(x^k,\xi_i)+ \sum\limits_{j\in \Lambda}  \sum\limits_{i\in \bar{\Lambda}}\lambda^k_{ij} H_j(x^k,\xi_l)s^k\big)\|\\
		&\leq\|\sum\limits_{j\in\Lambda}\sum\limits_{i\in \bar{\Lambda}}\lambda^k_{ij}\nabla \zeta_j(x^k+s^k,\xi_l)
		-(\sum\limits_{j\in \Lambda}  \sum\limits_{i\in \bar{\Lambda}} \lambda^k_{ij} \nabla \zeta_j(x^k,\xi_l)\\&+ \sum\limits_{j\in \Lambda}  \sum\limits_{i\in \bar{\Lambda}}\lambda^k_{ij} H_j(x^k,\xi_i)^Ts^k)\|\\
		&\leq \sum\limits_{j\in \Lambda}  \sum\limits_{i\in \bar{\Lambda}} \lambda^k_{ij}\|\epsilon s^k\|.
	\end{align*}
	Thus we get
	\begin{equation}\label{34}
		\|\sum\limits_{j\in\Lambda}\sum\limits_{i\in i\in \bar{\Lambda}}\lambda^k_{ij}\nabla \zeta_j(x^{k+1},\xi_i)\|\leq \epsilon \| s^k\|
	\end{equation}
	By $(\ref{33})$ and $(\ref{34})$ we get
	 \begin{equation}\label{35}
	\vartheta(x^{k+1})\geq -\frac{1}{2\omega}(\epsilon \|s^k\|)^2~~\text{and}~~	\mid\vartheta(x^{k+1})\mid\leq\frac{1}{2\omega}(\epsilon \|s^k\|)^2.
	\end{equation}
	Since $\alpha_k=1$ and $x^{k+1}=x^k+s^k,$ from $(\ref{30})$ and $(\ref{35}$) we have $$\|x^{k+1}-x^{k+2}\|=\|s^{k+1}\|\leq \frac{\epsilon}{\omega}\|s^k\|.$$
	Thus if $k\geq k_\epsilon$ and $j\geq 1$ then
	\begin{equation}\label{36}
		\|x^{k+j}-x^{k+j+1}\|\leq \big(\frac{\epsilon}{\omega}\big)^j\|x^k-x^{k+1}\|.
	\end{equation}
	To prove the superlinear convergence rate take $0<\aleph<1$ and define $$\epsilon^*=\min\{1-\sigma,\frac{\aleph}{1+2\aleph}\}\omega.$$
	If $\epsilon<\epsilon^*$ and $k\geq k_\epsilon,$ then by $(\ref{36})$ and convergence of $\{x^k\}$ to $x^*$, we have
	\begin{align*}
		\|x^*-x^{k+1}\|&\leq\sum\limits^{\infty}_{j=1}\|x^{k+j}-x^{k+j+1}\|\\
		&\leq\sum\limits^{\infty}_{j=1}(\frac{\aleph}{1+2\aleph})^j\|x^k-x^{k+1}\|\\
		&=(\frac{ \aleph}{1+\aleph})\|x^k-x^{k+1}\|.
	\end{align*}\
	\begin{equation}\label{37}
		\|x^*-x^{k+1}\|\leq(\frac{ \aleph}{1+\aleph})\|x^k-x^{k+1}\|.
	\end{equation}
	Now with the help of triangle inequality and $(\ref{37})$ we get
	\begin{align*}
		\|x^*-x^{k}\|&\leq\|x^k-x^{k+1}\|-\|x^{k+1}-x^*\|\\
		&\leq\|x^k-x^{k+1}\|-(\frac{ \aleph}{1+\aleph})\|x^k-x^{k+1}\|
	\end{align*}
	therefore \begin{equation}\label{38}
		\|x^*-x^{k}\|\leq(\frac{1}{1+\aleph})\|x^k-x^{k+1}\|.
	\end{equation}
	Now by $(\ref{37})$ and $(\ref{38}),$ we conclude if $\epsilon<\epsilon^*$ and $k\geq k_\epsilon$, then
	
	\begin{equation*}
		\|x^*-x^{k+1}\|\leq\aleph\|x^*-x^k\|,
	\end{equation*}
 \begin{equation*}
		\text{i.e.,}~~~\frac{\|x^*-x^{k+1}\|}{\|x^*-x^k\|}\leq\aleph.
	\end{equation*}
	Since $\aleph$ was arbitrary in $(0,1)$, thus the above quotient tends to zero and hence $\{x^k\}$ converges to $x^*$ with superlinear rate.
\end{proof}
\subsection{Numerical results}\label{ss3.6}
In this subsection, we consider some numerical examples, and solve with the help of quasi Newton algorithm (Algorithm \ref{algo1}) and compare with weighted sum method. To solve numerical examples with the help of Algorithm \ref{algo1}, a $Python$ code is developed. In our computations, we consider $\epsilon=10^{-4}$ as tolerance or maximum $5000$ iterations is considered as stopping criteria. Thus, in our computations, we use $\|s^k\|<10^{-4}$ or $|\vartheta|<10^{-4}$ or maximum $5000$ number of iterations as a stopping criteria.
\par Since we know the solution of multiobjective optimization problem is not an isolated point but a set of Pareto optimal solutions. We used a multi-start technique to generate a well distributed approximation of Pareto front. Here 100 uniformly distributed random point is chosen between $lb$ and $ub$ (where $lb,ub\in\mathbb{R}^n$ and $lb<ub$) and Algorithm \ref{algo1} is executed separately. The nondominated set of collection of critical points is considered as an approximate Pareto front. We have compared the approximate Pareto fronts obtained by Algorithm \ref{algo1}  with the approximate Pareto fronts obtained by weighted sum method. In weighted sum method, we solve the following single objective optimization problem
$$\min_{x\in\mathbb{R}^n} \left( w_1\phi_1(x)+ w_2\phi_2(x)+\dots+ w_m\phi_m(x)\right),$$ where $( w_1, w_2,\dots, w_m)= w\geq 0$, $ w\neq0$ using the technique developed in \citet{bazaraa1982algorithm} with initial approximation $x^0=\frac{1}{2}(lb+ub)$. For bi-objective optimization problems, we have considered weights $(1,0)$, $(0,1)$, and 98 random weights uniformly distributed in the square area of~~ $[0,1]\times[0,1]$ (i.e., any 98 random weights uniformly distributed in this area). On the other hand, for three-objective optimization problems, we have considered four types of weights: $(1,0,0)$, $(0,1,0)$, $(0,0,1)$, and 97 random weights uniformly distributed in the cubic area of $[0,1]\times[0,1]\times[0,1]$ (i.e., any 97 random weights uniformly distributed in this area).
\par To find the descent direction at $k^{th}$ iteration we solve subproblem 	\begin{center}
	$Q_N(x^k):$ $\displaystyle \min_{(t,s)\in \mathbb{R}\times\mathbb{R}^n}t$
	
	$ s.t. ~~~~~~~ \zeta_j(x^k,\xi_i)+ \nabla \zeta_j(x^k,\xi_i)^Ts +\frac{1}{2}s^TH_j(x^k,\xi_i)s-F_j(x)\leq t, ~~~\forall i\in \bar{\Lambda} ~\text{and} ~j\in \Lambda,$\\
	$~~~~~~~~~~~~lb\leq x+s\leq ub.$\\
	 For step length size $\alpha_k,$ we choose $\alpha_k,$ as largest $\alpha \in \{ \frac{1}{2^r} : r=1,2,3,\ldots\},$ which satisfies (\ref{18''}) i.e., \begin{equation*}
		F_j(x^k+\alpha_k s)\leq  F_j(x^k)+  \alpha_k \beta \vartheta(x^k).
		\end{equation*}
\end{center}
For Positive definite Hessian update we use modified BFGS formula which is given in (\ref{*1}).
\begin{example}\label{example1}\rm\textbf{(Two dimensional bi-objective non-convex problem under uncertainty set of two elements)}\rm\\
	Consider the uncertain bi-objective optimization problem
	$$P(U)=\{P(\xi):\xi\in U\}$$ such that
	$$P(\xi):~~~~~~ \displaystyle\min_{x\in\mathbb{R}^2} \zeta (x,\xi),$$
	where $\zeta:\mathbb{R}^2 \times U\to \mathbb{R}^2$ such that 	$\zeta(x,\xi)=(\zeta_1(x,\xi),\zeta_2(x,\xi)).$\\
	Consider  $U=\{(1,2),~(2,3)\}\subset \mathbb{R}^2$ such that $\xi^1=(\xi^1_1,\xi^1_2)=(1,2)$ $\xi^2=(\xi^2_1,\xi^2_2)=(2,3),$ and $\bar{\Lambda}=\{i:\xi_i\in U\}=\{1,2\}.$
	$$\zeta(x,\xi^i)=\bigg(\frac{1}{4}\big((x_1-\xi^i_1)^4+2(x_2-\xi^i_2)^4\big),~ (\xi^i_1x_2-\xi^i_2x_1^2)^2+(1-\xi^i_1x_1)^2\bigg),$$
	\begin{equation*}
		\text{and}~~	\zeta_1(x,\xi^i)=\frac{1}{4}(x_1-\xi^i_1)^4+2(x_2-\xi^i_2)^4,~	\zeta_2(x,\xi^i)=(\xi^i_1x_2-\xi^i_2x_1^2)^2+(1-\xi^i_1x_1)^2,~i=1,~2.
	\end{equation*}
Objective wise worst case cost type robust counterpart to $P(U)$ is given by
	\begin{equation}
		RC^*_1:~~~~~	\displaystyle\min_{x\in \mathbb{R}^2}F(x),\end{equation}
	\begin{equation*}
		\text{where}~F(x)=(F_1(x), F_2(x))~\text{and}
	\end{equation*}
	\begin{equation*}
		F_1(x)=\displaystyle \max_{i\in\{1,2\}} \zeta_1(x,\xi^i)=\displaystyle \max\{\frac{1}{4}\big((x_1-1)^4+2(x_2-2)^4\big),~\frac{1}{4}\big((x_1-2)^4+2(x_2-3)^4\big)\},
	\end{equation*}
	\begin{equation*}F_2(x)=\displaystyle \max_{i\in\{1,2\}} \zeta_2(x,\xi^i)=\displaystyle \max\{(x_2-2x_1^2)^2+(1-x_1)^2,(2x_2-3x_1^2)^2+(1-2x_1)^2\}.
	\end{equation*}
\begin{figure}[!httb]
	\centering
	\hfill
	\begin{subfigure}[b]{0.45\textwidth}
		\centering
		\includegraphics[width=\textwidth]{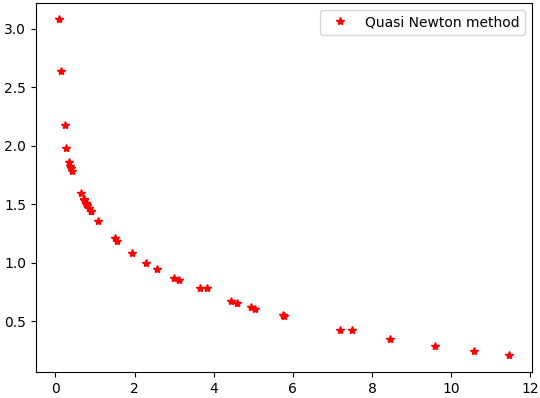}
		\caption{Approximate Pareto front generated by quasi Newton algorithm}
	\end{subfigure}
	\hfill
	\begin{subfigure}[b]{0.45\textwidth}
		\centering
		\includegraphics[width=\textwidth]{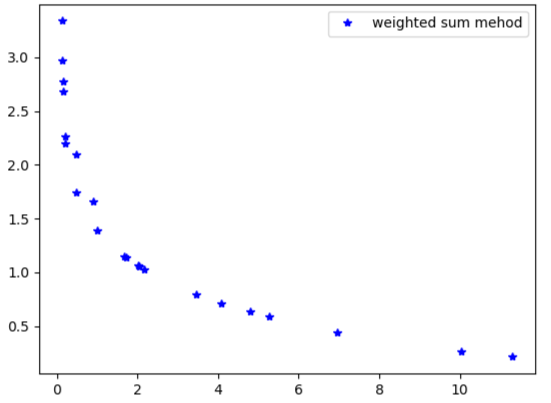}
		\caption{Approximate Pareto front generated by weighted sum method}
		\label{fig:five over x}
	\end{subfigure}
	\caption{Comarison of aproximate Pareto fronts generated by quasi Newton algorithm and weighted sum method.}
	\label{fig1}
\end{figure}
Now we solve $RC^*_1$ with the help of Algorithm \ref{algo1} (quasi Newton method).
In Algorithm \ref{algo1}, we consider the initial point $x^0= (1.10203444,~ 1.93225526),$ $\epsilon=10^{-4},$ and for $j=1,2,$ $i=1,2$ $H_j(x^0,\xi^i)$ is taken as follows
	$$H_1(x^0,\xi^1)=\begin{pmatrix} 0.15716692 & 0.08803005 \\0.08803005 & 0.0844797 \end{pmatrix},~
	H_1(x^0,\xi^2)=\begin{pmatrix} 5.02235556 & 1.7801104  \\1.7801104 & 8.00564985 \end{pmatrix},$$
	$$H_2(x^0,\xi^1)=\begin{pmatrix} 44.14712932 & -9.00388107  \\-9.00388107 & 1.85298502 \end{pmatrix},~                                                                           H_2(x^0,\xi^2)=\begin{pmatrix} 91.45875562 &-26.67973288  \\-26.67973288 &  7.78744484 \end{pmatrix}.$$
	Now, we solve subproblem $Q_N(x)$ at $x^0$ and we get
	 $s^0=(-6.48808533e-03,~9.42476577e-03),$ $t^0= -7.58041271e-06$~ $\alpha_0= 1.$  Then we find $x^1$ such as follows: $x^1=x^0+\alpha_0s^0=(1.09554636,~1.94168003).$ Now we find $F(x^0)$ and $F(x^1)$ as follows  	 $$F_1(x^0)=\max\{\zeta_1(x^0,\xi^1),~\zeta_1(x^0,\xi^2)\},$$                       
	$$\text{where}~~\zeta_1(x^0,\xi^1)=3.76284389e-05,~\zeta_1(x^0,\xi^2)= 8.12436783e-01.$$
Thus,	$F_1(x^0)=0.81243678.$
Also, $$F_2(x^0)=\max\{\zeta_2(x^0,\xi^1),~\zeta_2(x^0,\xi^2)\},$$
	$$\text{where}~~\zeta_2(x^0,\xi^1)=2.57126451e-01,~\zeta_2(x^0,\xi^2)= 1.49865417.$$
	Thus, $F_2(x^0)=1.49865417.$
	Hence, $F(x^0)=(0.81243678,~ 1.49865417).$

	$$F_1(x^1)=\max\{\zeta_1(x^1,\xi^1),~\zeta_1(x^1,\xi^2)\},$$
	$$\text{where}~~\zeta_1(x^1,\xi^1)=2.66192924e-05,~\zeta_1(x^1,\xi^2)=7.94541995e-01.$$
	Thus,	$F_1(x^1)=0.79454199.$
	$$F_2(x^1)=\max\{\zeta_2(x^1,\xi^1),~\zeta_2(x^1,\xi^2)\}$$
	$$\text{where}~~\zeta_2(x^1,\xi^1)=2.19593161e-01, ~\zeta_2(x^1,\xi^2)=1.49861808.$$
		Thus,	$F_2(x^1)= 1.49861808.$ Hence,
	$F(x^1)=(0.79454199,~ 1.49861808).$ We can observe that $F(x^1)=(0.79454199,~ 1.49861808)<F(x^0)=(0.81243678,~ 1.49865417)$ i.e., the function value decreases component wise in the descent direction. Now we update Hessian matrix with the help of $H_j(x^0,\xi^i).$
	$$H_1(x^1,\xi_1)=\begin{pmatrix} 0.0352717 & 0.02121372 \\0.02121372 & 0.06300125 \end{pmatrix},~
	H_1(x^1,\xi_2)=\begin{pmatrix} 3.18805989 & 1.47432974 \\1.47432974 & 9.75153934 \end{pmatrix},$$
	$$H_2(x^1,\xi_1)=\begin{pmatrix} 43.57305567 & -8.82511574 \\-8.82511574 & 1.80401205\end{pmatrix},~
	H_2(x^1,\xi_2)=\begin{pmatrix} 89.91329472 & -26.32939674\\-26.32939674 &  7.71467964\end{pmatrix}.$$
	With the help of $x^1$, $H_j(x^1,\xi_{i})$ for $j=1,2$ and $i=1,2,$ we solve subproblem and we get $t^1=-5.70455549e-06$, $s^1=(-8.44268112e-{03},~  4.07150091e-03)$ and after 3 iteration we get critical point $x^*=(1.08710368,~
	1.94575153),$ which will be the approximate weak Pareto optimal solution to the problem $RC^*_1$ and the corresponding nondominated point to $x^*$ is given by $F(x^*)=(0.79127969,~ 1.49856185).$ Also, by the Theorem \ref{t1} and Algorithm \ref{algo1}, $x^*$ will be the robust weak Pareto optimal solution to $P(U).$\\
	 \par Now we prove $x^*=(1.08710368,~
	 1.94575153)$ is a weak Pareto optimal solution to the problem $RC^*_1$. By observation we have $\zeta_1(x^*,\xi^1)=1.87211543e-05$, $\zeta_1(x^*,\xi^2)=7.91279695e-01$, $\zeta_2(x^*,\xi^1)=1.82175041e-01$, $\zeta_2(x^*,\xi^2)=1.49856185$ and $F(x^*)=(0.79127969,~ 1.49856185).$ These imply $I_1(x^*)=\{2\}$ and $I_2(x^*)=\{2\}.$ The sub differential of $F_1(x)$ and  $F_2(x)$ at $x^*$ are given as follows:\\
	 $\partial F_1(x^*)=conv\{\nabla\zeta_1(x,\xi^i):i\in I_1(x^*))\}=conv\{(-0.7607892531,~-2.3434674942)\}
	$\\
	  $\partial F_2(x^*)=conv\{\nabla\zeta_2(x,\xi^i):i\in I_2(x^*))\}=conv\{(0.1816117909,~1.38447930717)\}.$ Also
	 $\partial F(x)=
		conv \left( \displaystyle \cup_{j=1}^{2}\partial F_j(x^*) \right)=
	 conv\{(-0.7607892531,-2.3434674942),~(0.1816117909,1.38447930717)\}.$\\
	 Thus $0\in conv \left( \displaystyle \cup_{j=1}^{2}\partial F_j(x^*) \right)$ then $x^*$ will be approximate weak Pareto optimal solution to the problem $RC^*_1$ and the corresponding nondominated point to $x^*$ is given by $F(x^*)=(0.79127969,~ 1.49856185).$
	 Also, by the Theorem \ref{t1}, $x^*$ will be the approximate robust weak Pareto optimal solution to $P(U).$
Comparison of approximate Pareto front generated by  quasi Newton method and weighted sum method of the Example \ref{example1} is given in the Figure \ref{fig1}.
	
\end{example}
\begin{example}\label{example2}\rm\textbf{(Two dimensional three-objective non-convex  problem under uncertainty set of three elements)}\rm\\
	Consider the uncertain three objective optimization problem
	$$P(U)=\{P(\xi):\xi\in U\}$$ such that
	$$P(\xi):~~~~~~ \displaystyle\min_{x\in\mathbb{R}^2} \zeta (x,\xi),$$
	where	$\zeta(x,\xi)=(\zeta_1(x,\xi),\zeta_2(x,\xi),\zeta_3(x,\xi)),$  $\zeta:\mathbb{R}^2 \times U\to \mathbb{R}^3$. Consider $ U=\{(5,3),~(5,6),~(4,1)\}$ such that $\xi^1=(\xi^1_1,\xi^1_2)=(5,3),~\xi^2=(\xi^2_1,\xi^2_2)=(5,6),~\xi^3=(\xi^3_1,\xi^3_2)=(4,1),$ and $\bar{\Lambda}=\{i:\xi_i\in U\}=\{1,2,3\}.$
	\begin{equation*}
		~\zeta(x,\xi^i)=( x_1^2+\xi^i_1x_2^4+\xi^i_1\xi^i_2x_1x_2,~ 5x_1^2+\xi^i_1x_2^2+\xi^i_2x_1^4x_2,~e^{-\xi^i_1x_1+\xi^i_2x_2}+x_1^2-\xi^i_1x_2^2),~\text{where}~ \xi^i=(\xi^i_1,\xi^i_2).
	\end{equation*}
	Therefore,
	\begin{equation*}
		\zeta_1(x,\xi^i)= x_1^2+\xi^i_1x_2^4+\xi^i_1\xi^i_2x_1x_2,~i=1,2,3,
	\end{equation*}
	\begin{equation*}
		\zeta_2(x,\xi^i)=5x_1^2+\xi^i_1x_2^2+\xi^i_2x_1^4x_2,~i=1,2,3,
	\end{equation*}
	\begin{equation*}
		\zeta_3(x,\xi^i)=e^{-\xi^i_1x_1+\xi^i_2x_2}+x_1^2-\xi^i_1x_2^2,~i=1,2,3.
	\end{equation*}
	\begin{figure}[!httb]
		\centering
		\hfill
		\begin{subfigure}[b]{0.45\textwidth}
			\centering
			\includegraphics[width=\textwidth]{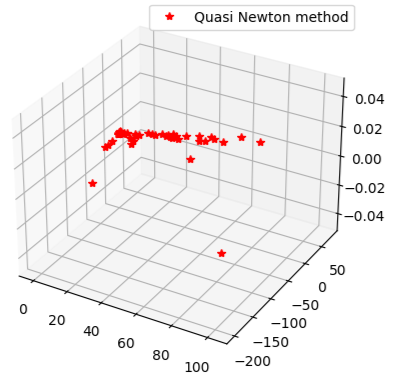}
			\caption{Approximate Pareto front generated by quasi Newton algorithm for $RC^*_1$}
		\end{subfigure}
		\hfill
		\begin{subfigure}[b]{0.46\textwidth}
			\centering
			\includegraphics[width=\textwidth]{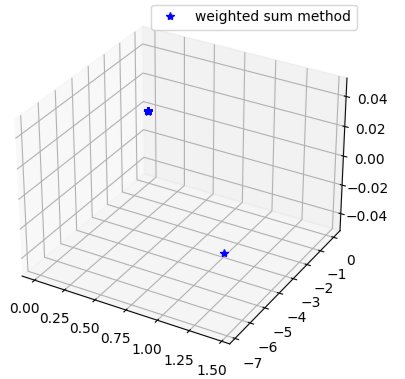}
			\caption{Approximate Pareto front generated by weighted sum method for $RC^*_1$}
			\label{fig:five over x}
		\end{subfigure}
		\caption{Comarison of aproximate Pareto fronts generated by quasi Newton algorithm and weighted sum method for Example \ref{example2}}
		\label{fig4}
	\end{figure}
	
	Objective wise worst case cost type robust counterpart to $P(U)$ is given by
	\begin{equation}
		RC^*_1:~~~~~	\displaystyle\min_{x\in \mathbb{R}^2}F(x),\end{equation}
	\begin{equation*}
		\text{where}~	F(x)=(F_1(x), F_2(x))~\text{and}
	\end{equation*}
	\begin{equation*}
		F_1(x)=\displaystyle \max_{i\in \bar{\Lambda}} \zeta_1(x,\xi^i)=\displaystyle \max_{i\in \bar{\Lambda}}\{x_1^2+5x_2^4+15x_1x_2,~x_1^2+5x_2^4+30x_1x_2,~x_1^2+4x_2^4+4x_1x_2\},
	\end{equation*}
	\begin{equation*}F_2(x)=\displaystyle \max_{i\in \bar{\Lambda}} \zeta_2(x,\xi^i)=\displaystyle \max\{5x_1^2+5x_2^2+3x_1^4x_2,~5x_1^2+5x_2^2+6x_1^4x_2,~ 5x_1^2+4x_2^2+x_1^4x_2\}
	\end{equation*}
	\begin{equation*}F_3(x)=\displaystyle \max_{i\in \bar{\Lambda}} \zeta_3(x,\xi^i)=\displaystyle \max\{e^{-5x_1+3x_2}+x_1^2-5x_2^2,~e^{-5x_1+6x_2}+x_1^2-5x_2^2,~e^{-4x_1+x_2}+x_1^2-4x_2^2\}.
	\end{equation*}
	\end{example}
\begin{example}\label{example3}\textbf{(Two dimensional three objective convex problem under uncertainty set of three elements)}\rm\\
	Consider the uncertain bi-objective optimization problem
	$$P(U)=\{P(\xi):\xi\in U\}$$ such that
	$$P(\xi):~~~~~~ \displaystyle\min_{x\in\mathbb{R}^2} \zeta (x,\xi),$$
	where	$\zeta(x,\xi)=(\zeta_1(x,\xi),\zeta_2(x,\xi)),$  $\zeta:\mathbb{R}^2 \times U\to \mathbb{R}^3.$ Consider $ U=\{(4,1),~(5,2),~(6,4)\}$ such that $\xi^1=(\xi^1_1,\xi^1_2)=(5,3),~\xi^2=(\xi^2_1,\xi^2_2)=(5,6),~\xi^3=(\xi^3_1,\xi^3_2)=(4,1),$ and $\bar{\Lambda}=\{i:\xi_i\in U\}=\{1,2,3\}.$
	\begin{equation*}
		\zeta(x,\xi^i)=(100\xi_1 (x_2-x_1^2)^2+\xi^i_2(1-x_1)^2,(x_2-\xi^i_1)^2+\xi^i_2x_1^2,~\xi^i_1x_1^2+3\xi^i_2x_2^2 ),
	\end{equation*}
	where $\xi=(\xi^i_1,\xi^i_2)$.
	\begin{equation*}
		\zeta_1(x,\xi^i)=100\xi^i_1 (x_2-x_1^2)^2+\xi^i_2(1-x_1)^2,~i=1,2,3,
	\end{equation*}
	\begin{equation*}
		\zeta_2(x,\xi^i)=(x_2-\xi^i_1)^2+\xi^i_2x_1^2,~i=1,2,3,
	\end{equation*}
	\begin{equation*}
		\zeta_3(x,\xi^i)=\xi^i_1x_1^2+3\xi^i_2x_2^2~i=1,2,3.
	\end{equation*}
	\begin{figure}[!httb]
	\centering
	\begin{subfigure}[b]{0.45\textwidth}
		\centering
		\includegraphics[width=\textwidth]{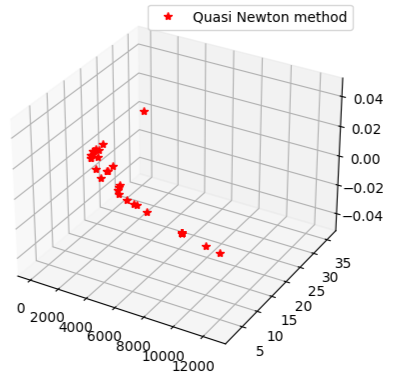}
		\caption{Approximate Pareto front generated by quasi Newton algorithm for $RC^*_1$}
	\end{subfigure}
	\hfill
	\begin{subfigure}[b]{0.45\textwidth}
		\centering
		\includegraphics[width=\textwidth]{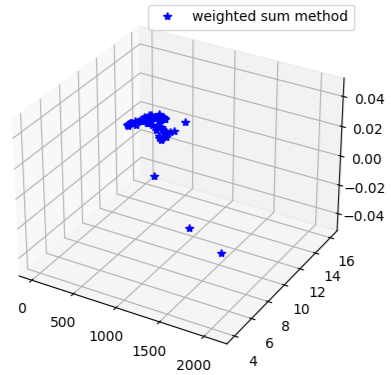}
		\caption{Approximate Pareto front generated by weighted sum method for $RC^*_1$}
	\end{subfigure}
	\caption{Comarison of aproximate Pareto fronts generated by quasi Newton algorithm and weighted sum method for Example \ref{example3}}
	\label{fig3}
\end{figure}
	Objective wise worst case cost type robust counterpart to $P(U)$ is given by
	\begin{equation}
			RC^*_1:~~~~~	\displaystyle\min_{x\in \mathbb{R}^2}F(x),\end{equation}
	\begin{equation*}
		\text{where}~F(x)=(F_1(x), F_2(x),  F_3(x) )~\text{and}
	\end{equation*}	
	\begin{equation*}
		F_1(x)=\displaystyle \max_{i\in \bar{\Lambda}} \zeta_1(x,\xi^i)=\displaystyle \max\{400 (x_2-x_1^2)^2+(1-x_1)^2,~500 (x_2-x_1^2)^2+2(1-x_1)^2,~~600 (x_2-x_1^2)^2+4(1-x_1)^2\},
	\end{equation*}
	\begin{equation*}F_2(x)=\displaystyle \max_{i\in \bar{\Lambda}}\zeta_2(x,\xi^i)=\displaystyle \max\{(x_2-4)^2+x_1^2,~(x_2-5)^2+2x_1^2,~~(x_2-6)^2+4x_1^2\}
	\end{equation*}
	\begin{equation*}
		F_3(x)=\displaystyle\max_{i\in \bar{\Lambda}} \zeta_3(x,\xi^i)=\displaystyle \max\{4x_1^2+3x_2^2,~5x_1^2+6x_2^2,~6x_1^2+12x_2^2\}.
	\end{equation*}
\end{example}
Based on the examples discussed above, we can observe that the objective functions in Example \ref{example1} and Example \ref{example2} are nonconvex, while those in Example \ref{example3} are non convex. From the figures associated with each example, it is evident that Algorithm \ref{algo1} successfully generates good approximations of the Pareto fronts in both convex and non convex cases. However, when examining the Figure of Example \ref{example2}, it becomes clear that the weighted sum method fails to produce even an approximate front in the non convex case.
\par In addition to these examples 20 test problems are constructed and the weighted sum method and Algorithm \ref{algo1} are compared using performance profiles.  \\
{\bf Test Problems:} The above two are included in a set of test problems that are generated and executed using both techniques. Table \ref{table1} provides details of these test problems. Table \ref{table1}'s columns $m$ and $n$ stand for the number of objective functions and the size of the decision variable ($x$). One can see that in our execution, we took into account 2 or 3 objective $1,$ $2,$ $3,$  $5,$ and $10$ dimensional problems. Each test problem's explicit form is provided in Appendix A.
\begin{rem}
	Since we developed a quasi Newton method for $RC^*_1$ which is the robust counterpart of $P(U).$ Therefore, $RC^*_1$ is considered as the test problem corresponding to some $P(U)$ in the Appendix A.
	\end{rem}
\begin{table}[!ht]
	\begin{center}
		\begin{tabular}
			{|c|c|c|c|}\hline
			Problem & $(m,n,p)$& $lb^T$ & $ub^T$\\ \hline
			
		TP1 &	$(2,2,2)$&$(-2,-2)^T$&$(5,5)^T$ \\ \hline
			TP2 &$(3,2,3)$&$(-1,-1)^T$&$(5,2)^T$\\ \hline
		TP3 &$(3,2,3)$&$(-1,-1)^T$&$(5,2)^T$\\ \hline
			TP4& $(2,2,2)$&$(-5,-5)^T$&$(5,5)^T$\\ \hline
			TP5 &$(2,1,2)$&-5&5\\ \hline
			TP6&$(2,3,3)$&$(0,0,0)^T$&$(1,1,1)^T$\\ \hline
			TP7&$(2,1,2)$&-3&3\\ \hline
			TP8&$(2,2,2)$&$(-4,-4)$&$(4,4)$\\ \hline
			TP9&$(2,3,3)$&$(-1,-2,-1)^T$&$(1,1,2)^T$\\ \hline
			TP10&(3,3,3)&$(1,-2,0)^T$&$(3.5,2.0,1.0)^T$\\ \hline
			TP11&(2,2,2)&$(-6,-6)^T$&$(6,4)^T$\\ \hline
			TP12&(3,3,3)&$(-1,-1,-1)$&$(5,5,5)$\\ \hline
			TP13&(3,2,3)&$(-1,-1,-1)$&$(5,5,5)$\\ \hline
			TP14&(2,1,2)&$-100$&$100$\\ \hline
			TP15&(2,2,2)&$(-2,-2)$&$(5,5)$\\ \hline
			TP16&(3,3,3)&$(0,0,0)$&$(1,1,1)$\\ \hline
			TP17&(3,2,3)&$(-4,-4)$&$(5,5)$\\ \hline
			TP18&(2,2,2)&$(.01,.001)$&$(1,1)$\\ \hline
			TP19&(2,5,2)&$(.001,...,001)$&$(1,...,1)$\\ \hline
			TP20&(3,10,3)&$(0.001,..,0.001)$&$(1,...,1)$\\ \hline
		\end{tabular}
		\caption{Details of test problems}
		\label{table1}
	\end{center}
\end{table}
%
\par The weighted sum method and Algorithm \ref{algo1} are compared using performance profiles.~Performance profiles are employed to contrast various approaches (see \citet{ansary2015modified,ansary2019sequential,ansary2020sequential} for more details of performance profiles). The cumulative function $\rho(\tau)$ that represents the performance ratio in relation to a specific metric and a set of methods is known as a performance profile. Give a set of methods $\mathcal{SO}$ and a set of problems $\mathcal{P}$, let $\varsigma_{p,s}$ be the performance of solver $s$ on solving $p$. The performance ratio is defined as $r_{p,s}=\varsigma_{p,s}/\min_{s\in\mathcal{SO}}\varsigma_{p,s}$. The cumulative function $ \rho_s(\tau)~~(s\in\mathcal{SO})$ is defined as
\begin{eqnarray*}
	\rho_s(\tau)=\frac{|\{p \in \mathcal{P}:r_{p,s}\leq \tau\}|}{|P|}.
\end{eqnarray*}
To justify  how much well-distributed this set is, the following metrics are considered for computing performance profile.\\\\
\textbf{$\Delta$-spread metric:} Let $x^1,x^2,\dots,x^N$ be the set of points obtained by a solver $s$ for problem $p$ and let these points be sorted by $f_j(x^i)\leq f_j(x^{i+1})$ $(i=1,2,\dots,N-1)$.
Assume that $x^0$, calculated over all the approximated Pareto fronts obtained by various solvers, is the best known approximation of the global minimum of $f_j$, and $x^{N+1}$, the best known approximation of the global maximum of $f_j$.
Define $\bar{\delta_j}$ as the average of the distances $\delta_{i,j}$, $ i=1,2,\dots,N-1.$ For an algorithm  $s$ and a problem $p$, the spread metric $\Delta_{p,s}$ is
\begin{eqnarray*}
	\Delta_{p,s}:=\underset{j\in\Lambda_m}{\max}\left(\frac{\delta_{0,j}+\delta_{N,j}+\Sigma_{i=1}^{N-1} |\delta_{i,j}-\bar{\delta_j}|}{\delta_{0,j}+\delta_{N,j}+(N-1)\bar{\delta_j}}\right).
\end{eqnarray*}
\textbf{Hypervolume metric:} Hypervolume metric of an approximate Pareto front with respect to a reference point $P_{ref}$ is defined as the volume of the entire region dominated by the efficient solutions obtained by a method with respect to the reference point. We have used the codes from \url{https://github.com/anyoptimization/pymoo} to calculate hypervolume metric. Higher values of $hv_{p,s}$ indicate better performance using hypervolume metric. So while using the performance profile of the solvers measured by hypervolume metric we need to set $\widetilde{hv_{p,s}}=\frac{1}{hv_{p,s}}$.\\
Performance profile using $\Delta$ spread metric and hypervolume metric are given in Figures \ref{figuredelta} and \ref{figurehypervolume} respectively.
\begin{figure}[!htbp]
	\centering
	\includegraphics[width=6.0in,height=3.0in]{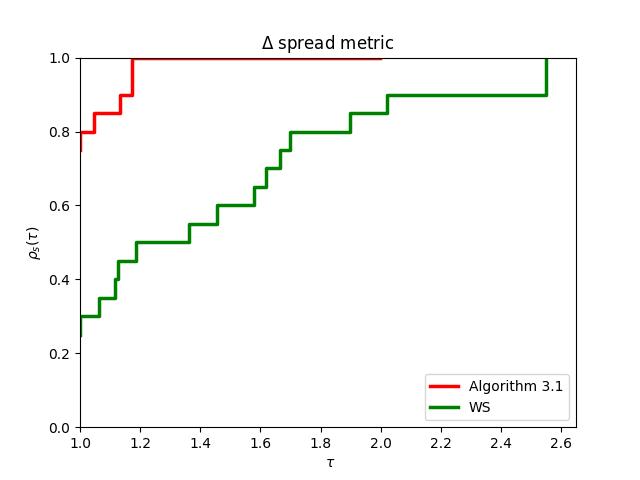}
	\caption{Performance profile using $\Delta$ spread metric }
	\label{figuredelta}
\end{figure}
\begin{figure}[!htbp]
	\centering
	\includegraphics[width=6.50in,height=3.0in]{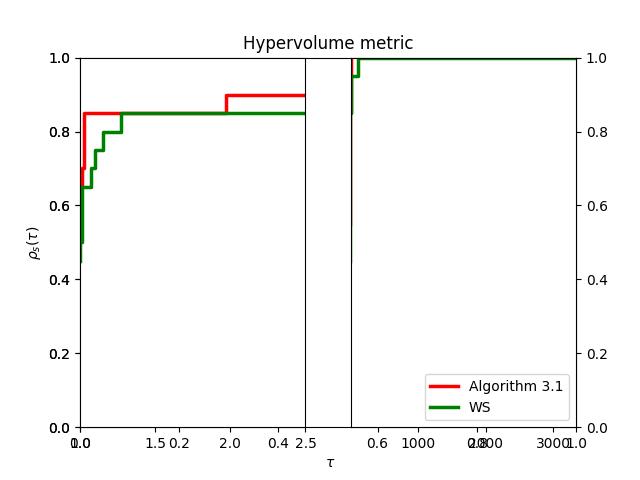}
	\caption{Performance profile using Hypervolume metric }
	\label{figurehypervolume}
\end{figure}
\\
In addition, using the number of iterations and function evaluations, we computed performance profiles. Both methods use the forward difference formula to calculate gradients, which necessitates $n$ additional function evaluations.~If {\it \# Iter} and {\it \# Fun} denote the number of iterations and number of function evaluations required to solve a problem respectively then total function evaluation is ${\it \# Fun}+n  {\it \# Iter}$. Figures \ref{figurenumberofiteration} and \ref{figurefunctionevaluation} show the performance profiles using the number of iterations and the number of function evaluations, respectively.
\begin{figure}[!htbp]
	\centering
	\includegraphics[width=6.50in,height=3.0in]{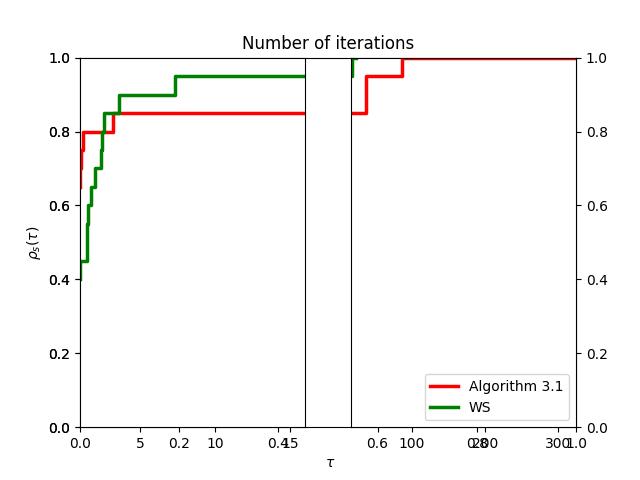}
	\caption{Performance profile using number of iterations}
	\label{figurenumberofiteration}
\end{figure}
\begin{figure}[!htbp]
	\centering
	\includegraphics[width=6.5in,height=3.0in]{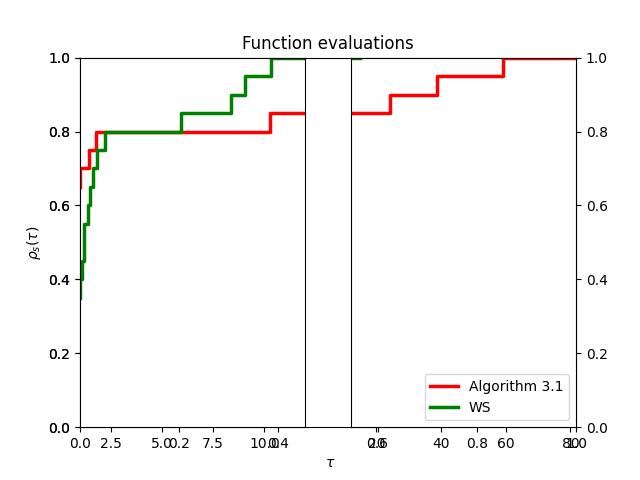}
	\caption{Performance profile using function evaluations}
	\label{figurefunctionevaluation}
\end{figure}
The performance profile figures show that the Algorithm \ref{algo1} typically outperforms the weighted sum method.
 \par The advantage of Algorithm \ref{algo1} is that it eliminates the need to choose pre-order weights, which is a challenge in the weighted sum method. Consequently, Algorithm \ref{algo1} enables us to obtain good approximations of Pareto fronts without the requirement of pre-order weight selection. Theorem \ref{t1} demonstrate that the Pareto optimal solution for the problem $RC^*_1$ is the robust Pareto optimal solution to the $P(U).$ Consequently, the approximate Pareto front obtained for $RC^*_1$ is also an approximate Pareto front for $P(U)$. By utilizing $RC^*_1$ and Algorithm \ref{algo1}, our main objective of discovering the approximate Pareto front for the multiobjective optimization problem $P(U)$ with a finite parameter uncertainty set has been achieved.
\section{Conclusions}\label{s4}
We solved an uncertain multiobjective optimization problem $P(U)$ defined in (\ref{mp}), using its robust counterpart $RC^*_1$, which is a deterministic multiobjective optimization problem. By solving $RC^*_1$, we obtain the solution for $P(U)$, eliminating the need to directly solve $P(U)$. To tackle $RC^*_1$, we employed a quasi Newton method, assuming strong convexity of the objective function. This assumption ensures that the necessary condition for optimality is also sufficient for Pareto optimality. Our approach involved developing a quasi Newton algorithm to find critical points. This algorithm solves a subproblem to determine the quasi Newton descent direction and employs an Armijo-type inexact line search technique. We also incorporated a modified BFGS formula. By combining these elements, we constructed a quasi Newton algorithm that generates a sequence converging to a critical point. This critical point serves as the Pareto optimal solution for $RC^*_1$ and the robust Pareto optimal solution for the uncertain multiobjective optimization problem $P(U)$. Under certain assumptions, we proved that the sequence generated by the quasi Newton algorithm converges to a critical point with a superlinear rate. Finally, we validated the algorithm through numerical examples. We compared the approximate Pareto front obtained by the quasi Newton algorithm with the one generated by the weighted sum method. We observed that Algorithm \ref{algo1} (quasi Newton method) successfully generates good approximations of the Pareto fronts in both convex and non convex cases. However, the weighted sum method fails to produce even an approximate front in the non convex case. In addition to this, we have computed performance profiles using number of iterations, function evaluations, $\Delta$ spread metric and hypervolume metric. On the basis of performance profile figures that quasi Newton method performs better than weighted sum method in most cases.
\par In this work, we were focused on finding the solution to an uncertain multiobjective optimization problem under a finite uncertainty set. The solution of an uncertain multiobjective optimization problem under an infinite uncertainty set is left for future scope.
\section*{Acknowledgment:}
This research is supported by Govt. of India CSIR fellowship, Program No. 09/1174(0006)/2019-EMR-I, New Delhi.
\section*{Conflict of interest:}
The authors declared that they have no conflict of interests.
\bibliographystyle{apa}
\bibliography{quasi}

\newpage
\appendix
\section{Details of test problems}
\begin{appendices}{\bf Appendix A: Details of test problems}
	\begin{enumerate}[({TP}1)]
			\item $F: \mathbb{R}^2\rightarrow \mathbb{R}^2$ is defined as $F_j(x)= \max\{\zeta_{j}(x,\xi^{i}): i=1,2\},$ $j=1,2,$ where $\xi^i=(\xi^i_1,\xi^i_2)$ and $\xi^1=(1,2),$ $\xi^2=(2,3).$
		\begin{eqnarray*}
				\zeta_1(x,\xi^i)=\frac{1}{4}(x_1-\xi^i_1)^4+2(x_2-\xi^i_2)^4~~i=1,2\\\mbox{and}~~~~	\zeta_2(x,\xi^i)=(\xi^i_1x_2-\xi^i_2x_1^2)^2+(1-\xi^i_1x_1)^2,~i=1,~2.
		\end{eqnarray*}
		\item $F: \mathbb{R}^2\rightarrow \mathbb{R}^3$ is defined as $F_j(x)= \max\{\zeta_j(x,\xi^{i}): i=1,2,3\},$ $j=1,2,3$ where $\xi^i=(\xi^i_1,\xi^i_2)$ and $\xi^1=(5,3),$ $\xi^2=(5,6),$ $\xi^3=(4,1).$
			\begin{equation*}
			\zeta_1(x,\xi^i)= x_1^2+\xi^i_1x_2^4+\xi^i_1\xi^i_2x_1x_2,
		\end{equation*}
		\begin{equation*}
			\zeta_2(x,\xi^i)=5x_1^2+\xi^i_1x_2^2+\xi^i_2x_1^4x_2,
		\end{equation*}
		\begin{equation*}
			\zeta_3(x,\xi^i)=e^{-\xi^i_1x_1+\xi^i_2x_2}+x_1^2-\xi^i_1x_2^2.
		\end{equation*}
	\item  $F: \mathbb{R}^2\rightarrow \mathbb{R}^3$ is defined as $F_j(x)= \max\{\zeta_j(x,\xi^{i}): i=1,2,3\},$ $j=1,2,3$ where $\xi^i=(\xi^i_1,\xi^i_2)$ and $\xi^1=(4,1),$ $\xi^2=(5,2),$ $\xi^3=(6,4).$
\begin{equation*}
	\zeta_1(x,\xi^i)=100\xi^i_1 (x_2-x_1^2)^2+\xi^i_2(1-x_1)^2,
\end{equation*}
\begin{equation*}
	\zeta_2(x,\xi^i)=(x_2-\xi^i_1)^2+\xi^i_2x_1^2,
\end{equation*}
\begin{equation*}
	\zeta_3(x,\xi^i)=\xi^i_1x_1^2+3\xi^i_2x_2^2.
\end{equation*}
		\item $F: \mathbb{R}^2\rightarrow \mathbb{R}^2$ is defined as $F_j(x)= \max\{\zeta_j(x,\xi^{i}): i=1,2\}$ $j=1,2,$ where $\xi^1=(1,3)^T,~ ~\xi^2=(3,1)^T.$
		Note that
		$\zeta_{1}(x,\xi^1)=(x_1-1)^2+(x_2-3)^2$, and $\zeta_{1}(x,\xi^2)=(x_1-3)^2+(x_2-1)^2$, $\zeta_{2}(x,\xi^1)=x_1^2+3x_2^2$, and $\zeta_{2}(x,\xi^{2})=3x_1^2+x_2^2$.
	\item  $F: \mathbb{R}\rightarrow \mathbb{R}^2$ is defined as $F_j(x)= \max\{\zeta_j(x,\xi^{i}): i=1,2\}$ $j=1,2,$ where $\xi^1=-1$, $\xi^2=3$, $\zeta_1(x,\xi^i)=(x-\xi^i)^2$ and $\zeta_2(x,\xi^i)=x^2+\xi^i x$ for $i=1,2$.
\item  $F: \mathbb{R}^3\rightarrow \mathbb{R}^2$ is defined as $F_j(x)= \max\{\zeta_j(x,\xi^{i}): i=1,2,3\},$ $j=1,2.$ Here $\xi^i=(\xi^i_1,\xi^i_2,\xi^i_3)$ and therefore $\xi^1=(1,1,1)^T,$  $\xi^2=(1,-1,1)^T$, $\xi^3=(1,-2,2)^T$,
		\begin{eqnarray*}
			\zeta_1(x,\xi^i)=&1-e^{-\sum_{j=1}^3\xi^j_{i} \left(x_j-\frac{1}{\sqrt{3}}\right)^2}~~i=1,2,3\\ \mbox{and}~~~~
			\zeta_2(x,\xi^i)=&1-e^{-\sum_{j=1}^3\xi^j_{i} \left(x_j+\frac{1}{\sqrt{3}}\right)^2}~~i=1,2,3.
		\end{eqnarray*}
	\item $F: \mathbb{R}\rightarrow \mathbb{R}^2$ is defined as $F_j(x)= \max\{\zeta_j(x,\xi^{i}): i=1,2\}$ $j=1,2,$ where $\xi^1=-4$, $\xi^2=7$, $\zeta_1(x,\xi^i)=(x-\xi^i)^2$ and $\zeta_2(x,\xi^i)=x^2+\xi^i x$ for $i=1,2$.
	\item $F: \mathbb{R}^2\rightarrow \mathbb{R}^2$ is defined as $F_j(x)= \max\{\zeta_j(x,\xi^{i}): i=1,2\}$ $j=1,2,$ where $\xi^1=(1,3)^T,~ ~\xi^2=(3,1)^T.$
	Note that
	$\zeta_{1}(x,\xi^1)=(x_1-1)^2+(x_2-3)^2$, and $\zeta_{1}(x,\xi^2)=(x_1-3)^2+(x_2-1)^2$, $\zeta_{2}(x,\xi^1)=x_1^2+3x_2^2$, and $\zeta_{2}(x,\xi^{2})=3x_1^2+x_2^2$.
		\item $F: \mathbb{R}^3\rightarrow \mathbb{R}^2$ is defined as $F_j(x)= \max\{\zeta_j(x,\xi^{i}): i=1,2,3\},$ $j=1,2.$ Here $\xi^i=(\xi^i_1,\xi^i_2,\xi^i_3)$ and therefore $\xi^1=(1,1,1)^T,$  $\xi^2=(1,-1,1)^T$, $\xi^3=(1,-2,2)^T$,
	\begin{eqnarray*}
		\zeta_1(x,\xi^i)=&1-e^{-\sum_{j=1}^3\xi^j_{i} \left(x_j-\frac{1}{\sqrt{3}}\right)^2}~~i=1,2,3\\ \mbox{and}~~~~
		\zeta_2(x,\xi^i)=&1-e^{-\sum_{j=1}^3\xi^j_{i} \left(x_j+\frac{1}{\sqrt{3}}\right)^2}~~i=1,2,3.
	\end{eqnarray*}
\item $F: \mathbb{R}^3\rightarrow \mathbb{R}^3$ is defined as $F_j(x)= \max\{\zeta_j(x,\xi^{i}): i=1,2,3\}$ $j=1,2,3,$ where $\xi^i=(\xi^i_1,\xi^i_2,\xi^i_3)$ and
$\xi^1=(1,1,1)^T$, $\xi^2=(1,-1,1)^T$, $\xi^3=(1,-2,2)^T.$
\begin{eqnarray*}
	\zeta_1(x,\xi^i)&=&\left(1+\xi^i_3x_3\right)\left(\xi^i_{1}\xi^i_{2} x_1^3x_2^3-10\xi^i_{1}x_1-4\xi^i_{2}x_2\right)~~i=1,2,3\\
	\zeta_2(x,\xi^i)&=&\left(1+\xi^i_3x_3\right)\left(\xi^i_{1}\xi^i_{2}x_1^3x_2^3-10\xi^i_{1}x_1+4\xi^i_{2}x_2\right)~~i=1,2,3\\
	\mbox{and}~~~~
	\zeta_3(x,\xi^i)&=&\left(1+\xi^i_{3}x_3\right)\xi^i_{1}x_1^2~~i=1,2,3.
\end{eqnarray*}
\item  $F: \mathbb{R}^2\rightarrow \mathbb{R}^2$ is defined as $F_j(x)= \max\{\zeta_j(x,\xi^{i}): i=1,2\},$ $j=1,2,$ where $\xi^i=(\xi^i_1,\xi^i_2)$ and $\xi^1=\{(2,2)$, $\xi^2=(0,4).$
\begin{eqnarray*}
	\zeta_1(x,\xi^i)=(x_1-\xi^i_1)^2+(x_2+\xi^i_2)^2~~i=1,2\\ \mbox{and}~~~~
	\zeta_2(x,\xi^i)=(\xi^i_1x_1+\xi^i_2x_2)^2~~i=1,2.
\end{eqnarray*}
\item $F: \mathbb{R}^3\rightarrow \mathbb{R}^3$ is defined as $F_j(x)= \max\{\zeta_j(x,\xi^{i}): i=1,2,3\},$ $j=1,2,3,$ where $\xi^i=(\xi^i_1,\xi^i_2)$ and $\xi^1=(4,1)$ $\xi^2=(0,2)$, $\xi^3=(1,0).$	
\begin{eqnarray*}
	\zeta_1(x,\xi^i)=x_1^2+(x_2-\xi^i_1)^2-\xi^i_2x_3^2~~i=1,2,3\\
	\zeta_2(x,\xi^i)=\xi^i_1x_1+\xi^i_2x_2^2+x_3+4\xi^i_1\xi^i_2~~i=1,2,3\\\mbox{and}~~~~
	\zeta_3(x,\xi^i)=\xi^i_1x_1^2+6x_2^2+25(x_3-\xi^i_2x_1)^2.
\end{eqnarray*}
	\item $F: \mathbb{R}^2\rightarrow \mathbb{R}^3$ is defined as $F_j(x)= \max\{\zeta_j(x,\xi^{i}): i=1,2,3\},$ $j=1,2,3,$ where $\xi^i=(\xi^1_i,\xi^2_i)$ and $\xi^1=(2,3)$ $\xi^2=(4,5)$, $\xi^3=(2,0).$
\begin{eqnarray*}
	\zeta_1(x,\xi^i)= x_1^2+\xi^i_1x_2^4+\xi^i_1\xi^i_2x_1x_2,~~i=1,2,3\\
	\zeta_2(x,\xi^i)=5x_1^2+\xi^i_1x_2^2+\xi^i_2x_1^4x_2,~~i=1,2,3\\\mbox{and}~~~~
	f_3(x,\xi^i)=e^{-\xi^i_1x_1+\xi^i_2x_2}+x_1^2-\xi^i_1x_2^2,~~i=1,2,3.
\end{eqnarray*}
\item 	$F: \mathbb{R}\rightarrow \mathbb{R}^2$ is defined as $F_j(x)= \max\{\zeta_j(x,\xi^{i}): i=1,2\},$ $j=1,2,$ where $\xi^1=-3$ $\xi^2=8.$
\begin{eqnarray*}
	\zeta_1(x,\xi^i)=(x-\xi^i)^2,~~i=1,2\\\mbox{and}~~
\zeta_2(x,\xi^i)=-x^2-\xi^i x.
\end{eqnarray*}
\item $F: \mathbb{R}^2\rightarrow \mathbb{R}^2$ is defined as $F_j(x)= \max\{\zeta_j(x,\xi^{i}): i=1,2\},$ $j=1,2,$ where $\xi^i=(\xi^i_1,\xi^i_2)$ and $\xi^1=(1,1)$ $\xi^2=(0,2).$
\begin{eqnarray*}
	\zeta_1(x,\xi^i)=(x_1-\xi^i_1)^2+(x_2+\xi^i_2)^2,~~i=1,~2\\\mbox{and}~~
	\zeta_2(x,\xi^i)=(\xi^i_1x_1+\xi^i_2x_2)^2~~i=1,~2.
\end{eqnarray*}
\item $F: \mathbb{R}^3\rightarrow \mathbb{R}^3$ is defined as $F_j(x)= \max\{\zeta_j(x,\xi^{i}): i=1,2,3\},$ $j=1,2,3,$ where $\xi^i=(\xi^i_1,\xi^i_2)$ and $\xi^1=(5,4)$ $\xi^2=(0,8),$ $\xi^3=(4,0).$
\begin{eqnarray*}
	\zeta_1(x,\xi^i)=3x_1^2+(x_2-\xi^i_1)^2+\xi^i_2x_3^2~~~i=1,2,3\\
	\zeta_2(x,\xi^i)=2\xi^i_1x_1+\xi^i_2x_2^2+3x_3+4\xi^i_1\xi^i_2~~~i=1,2,3\\\mbox{and}~	\zeta_3(x,\xi^i)=\xi^i_1x_1^2+6x_2^2+20(x_3-\xi^i_2x_1)^2)~~~i=1,2,3.\\
\end{eqnarray*}
	\item \label{cec} $F: \mathbb{R}^2\rightarrow \mathbb{R}^3$ is defined as $F_j(x)= \max\{\zeta_j(x,\xi^{i}): i=1,2,3\},$ $j=1,2,3,$ where $\xi^1=(3.0,3.0,\dots,3.0)^T$, $\xi^2=(6.0,6.0,\dots,6.0)^T$, and $\xi^3=(9.0,9.0,\dot,9.0)^T$. Define
\begin{eqnarray*}
	I_1&=&\{k\in\{2,3,\dots,n\}| k ~~mod~~2=1\}\\
	I_1&=&\{k\in\{2,3,\dots,n\}| k ~~mod~~2=0\}\\
	\zeta_1(x,\xi^i)&=& x_1+\frac{2}{|I_1|} \sum_{k\in I_1} \left(x_k-\sin(\xi^i_k\pi x_1+\frac{k\pi}{n})\right)^2~~i=1,2,3\\
	\zeta_2(x,\xi^i)&=&1-\sqrt{x_1} +\frac{2}{I_2}\sum_{k\in I_2} \left(x_k-\sin(\xi^i_k\pi x_1+\frac{k\pi}{n})\right)^2~~i=1,2,3.
\end{eqnarray*}
{\bf Note:} Test problem TP\ref{cec} is formulated using CEC09\_1 in \citet{zhang2008multiobjective}.
	\item $F: \mathbb{R}^2\rightarrow \mathbb{R}^2$ is defined by $F_j(x)= \max\{\zeta_j(x,\xi^{i}): i=1,2\},$ $j=1,2.$
\begin{eqnarray*}
	\zeta_1=	\max\{\zeta_{1}(x,\xi^{1}),\zeta_{1}(x,\xi^{2})\}=\max\{1+x_1^{\frac{1}{4}},1+x_2^2\}\\
	F_2=	\max\{\zeta_{2}(x,\xi^{1}),\zeta_{2}(x,\xi^{2})\}=\bigg\{1-\bigg(\frac{x_1}{1+x_1^{\frac{1}{4}}}\bigg)^2,1-\bigg(\frac{x_1}{1+x_2}\bigg)^2\bigg\}
\end{eqnarray*}
	\item \label{dt1}  $F: \mathbb{R}^n\rightarrow \mathbb{R}^m$ is defined by $F_j(x)= \max\{\zeta_j(x,\xi^{i}): i=1,2,3\},$ $j=1,2,\dots,m,$ where $\xi^1=(0.25,0.25,\dots,0.25)^T\in \mathbb{R}^n$,  $\xi^2=(0.5,0.5,\dots,0.5)^T\in \mathbb{R}^n$, $\xi^3=(0.75,0.75,\dots,0.75)^T\in \mathbb{R}^n$. Define $$gx_i=100\left(K+\sum_{k=m}^n\left((x_i-\xi^i_k)^2-\cos(20\pi(x_k-\xi^i_k))\right)\right)$$
for $i=1,2,3$ where $K=m+n-1$. $\zeta_j(x,\xi^i) $ for $j=1,2,..., m$ are defined as
\begin{eqnarray*}
	\zeta_1(x,\xi^i)&=&0.5(1+gx_i)\prod_{k=1}^{m-1} x_k\\
	\zeta_j(x,\xi^i)&=&	0.5(1+gx_i)\prod_{k=1}^{m-j} x_k(1-x_{m-j+1}) ~~j=2,\dots,m
\end{eqnarray*}
\item \label{dt2} $F: \mathbb{R}^n\rightarrow \mathbb{R}^m$ is defined by $F_j(x)= \max\{\zeta_j(x,\xi^{i}): i=1,2,3\},$ $j=1,2,\dots,m,$ where $\xi^1=(0.4,0.4,\dots,0.4)^T\in \mathbb{R}^n$,  $\xi^2=(0.5,0.5,\dots,0.5)^T\in \mathbb{R}^n$, $\xi^3=(0.6,0.6,\dots,0.6)^T\in \mathbb{R}^n$. Define $$gx_i=\sum_{k=m}^n(x_i-\xi^i_k)^2$$
for $i=1,2,3$. $f_j(x,\xi^i) $ for $j=1,2,..., m$ are defined as
\begin{eqnarray*}
	\zeta_1(x,\xi^i)&=&(1+gx_i)\prod_{k=1}^{m-1} \cos(0.5\pi x_k)\\
	\zeta_j(x,\xi^i)&=&	0.5(1+gx_i)\prod_{k=1}^{m-j}\cos(0.5\pi x_k)\sin(0.5\pi x_{m-k+1}) ~~j=2,\dots,m
\end{eqnarray*}
{\bf Note:} TP\ref{dt1} and TP\ref{dt2} are are constructed using the test problems DTLZ1 and DTLZ2 respectively from \citet{deb2002scalable}.
\end{enumerate}
\end{appendices}
\end{document}